\documentclass{amsart}

\usepackage[utf8]{inputenc}
\usepackage[russian,english]{babel}

\usepackage[backend=bibtex,style=alphabetic,natbib=false,url=false,sorting=nyt,doi=false,backref=false]{biblatex}
\addbibresource{EqTorus.bib}

\setlength{\textheight}{43pc} 
\setlength{\textwidth}{28pc} 

\usepackage{fullpage}
\usepackage{graphicx}

\usepackage{tikz} 
 \usetikzlibrary{arrows}
\usetikzlibrary{intersections}
\usetikzlibrary{calc}
\usepackage[pdftex,bookmarks,colorlinks,breaklinks]{hyperref}  % PDF hyperlinks, with coloured links
\usepackage{color}
\definecolor{dullmagenta}{rgb}{0.4,0,0.4}   % #660066
\definecolor{darkblue}{rgb}{0,0,0.4}
\hypersetup{linkcolor=red,citecolor=blue,filecolor=dullmagenta,urlcolor=darkblue} % coloured links

\newcommand{\x}{{\mathbf x}}
\newcommand{\y}{{\mathbf y}}
\renewcommand{\c}{{\mathbf c}}

 % definition =

\renewcommand{\k}{{\mathbf k}}
\newtheorem{thm}{Theorem}[section]

\newtheorem{prop}[thm]{Proposition}
\newtheorem{lem}[thm]{Lemma}
\theoremstyle{remark}
\newtheorem{rem}[thm]{Remark}

 \graphicspath{{../figs/}} %note trailing /

\begin{document}
\title{An Isoperimetric inequality for \\  an integral operator on flat tori}

\author{Braxton Osting}
\address{Department of Mathematics, 
University of Utah, 
Salt Lake City, UT 84112, USA}
\email{osting@math.utah.edu}
\thanks{}

\author{Jeremy Marzuola}
\address{Department of Mathematics, University of North Carolina, Chapel Hill, NC  27599, USA}
\email{marzuola@math.unc.edu}
\thanks{}

\author{Elena Cherkaev}
\address{Department of Mathematics, 
University of Utah, 
Salt Lake City, UT 84112, USA}
\email{elena@math.utah.edu}
\thanks{}

%\subjclass[2000]{Primary }
%    For articles to be published after 1 January 2010, you may use
%    the following version:
\subjclass[2010]{35P05, 45P05, 52B60, 58C40}

\keywords{Isoperimetric Inequality, Flat torus, Equilateral Torus, Hilbert-Schmidt Integral Operator}

\date{\today}

\begin{abstract}
We consider a class of Hilbert-Schmidt integral operators with an isotropic, stationary kernel acting on square integrable functions defined on flat tori. For any fixed kernel which is positive and decreasing, we show that among all unit-volume flat tori, the equilateral torus maximizes the operator norm and the Hilbert-Schmidt norm. 
\end{abstract}

\maketitle

\section{Statement of  results and discussion}
Let $T_{a,b}$ be a unit-volume flat torus with parameters $(a,b)\in U \subset \mathbb{R}^2$ with 
\begin{equation}
\label{eqn:U}
U := \left\{ (a,b) \in \mathbb R^2 \colon b>0, \ a \in [ 0,1/2  ], \ \text{and} \ a^2 + b^2 \geq 1 \right\}.
\end{equation}
The parametrization we use is fairly standard and will be introduced in detail in Section \ref{sec:proof}. The set $U$ is illustrated in Figure \ref{fig:FundDom} and, in this notation, the square torus has parameters $(a,b) = (0,1)$ and the equilateral torus has parameters $(a,b) = \left(\frac{1}{2}, \frac{\sqrt3}{2} \right)$.
 Consider the integral operator $A_f \colon L^2(T_{a,b}) \to L^2(T_{a,b})$ with  isotropic stationary kernel, $f\colon \mathbb R \to \mathbb R$, given by 
$$
A_f \psi (\x) := \int_{T_{a,b}} f\left(d^2(\x,\y)\right) \psi(\y) \ d\y. 
$$
Here, $d$ is the geodesic distance on $T_{a,b}$ and $f$ is evaluated at the squared distance. 
If 
\begin{equation} \label{eq:IntKer}
\int_{T_{a,b}} \int_{T_{a,b}} |f\left( d^2(\x,\y)\right)|^2 \ d\x d\y < \infty,
\end{equation} 
then $A_f$ is a Hilbert-Schmidt integral operator, hence bounded and compact. 
Let $\| \cdot \|_{L^2(T_{a,b})}$ and $\| \cdot \|_{\mathrm{H.S.}}$ denote the  $L^2(T_{a,b}) \to L^2(T_{a,b})$ and Hilbert-Schmidt  operator norms respectively. 

\begin{thm} \label{thm:main}
Let $f$ be a fixed positive and non-increasing function satisfying \eqref{eq:IntKer}. 
Among unit-volume flat tori, $T_{a,b}$, the equilateral torus is a maximizer of both  $\| A_f \|_{L^2(T_{a,b})}$ and $\| A_f \|_{\mathrm{H.S.}}$. The square torus is a critical  point of these spectral quantities.  If $f$ is additionally assumed to be decreasing, then the equilateral torus is the unique minimizer and the square torus is a saddle point of these spectral quantities. 
\end{thm}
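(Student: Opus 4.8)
The plan is to use the translation invariance of the kernel to diagonalize $A_f$ explicitly, to reduce both spectral quantities to a single geometric functional of the Voronoi cell, and then to settle the optimization over $U$ by combining a symmetry argument with a geometric comparison. Since $d(\x,\y)$ depends only on $\x-\y$, the operator $A_f$ is a convolution operator on $T_{a,b}=\mathbb R^2/\Lambda$, where $\Lambda$ is the unit-covolume lattice attached to $(a,b)$ as in Section \ref{sec:proof}. Setting $g(\mathbf z):=f(d^2(\mathbf z,0))$, the characters $e^{2\pi i\k\cdot\x}$, $\k\in\Lambda^*$, diagonalize $A_f$ with eigenvalues equal to the Fourier coefficients $\widehat g(\k)=\int_{T_{a,b}}g(\mathbf z)\,e^{-2\pi i\k\cdot\mathbf z}\ud\mathbf z$. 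Because $f>0$ we have $g\ge0$, hence $|\widehat g(\k)|\le\widehat g(0)$ for all $\k$; as the even kernel makes $A_f$ self-adjoint, the operator norm equals the top eigenvalue $\widehat g(0)$. Integrating over the Voronoi cell $V=V(\Lambda)$, on which the geodesic distance agrees with $|\mathbf z|$, gives $\|A_f\|_{L^2(T_{a,b})}=\int_V f(|\mathbf z|^2)\ud\mathbf z$, while Parseval (equivalently, unfolding the double integral in \eqref{eq:IntKer}) gives $\|A_f\|_{\mathrm{H.S.}}^2=\sum_{\k\in\Lambda^*}\widehat g(\k)^2=\int_V f(|\mathbf z|^2)^2\ud\mathbf z$. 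Both quantities therefore have the common form $J_\Lambda[F]:=\int_{V(\Lambda)}F(|\mathbf z|^2)\ud\mathbf z$, with $F=f$ for the operator norm and $F=f^2$ for the squared Hilbert--Schmidt norm, and in both cases $F$ is positive and non-increasing.

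The next step is to linearize $J_\Lambda$ geometrically. Writing $F(s)=\int_s^\infty(-F'(u))\ud u$ and exchanging the order of integration yields the layer-cake identity
\begin{equation*}
J_\Lambda[F]=\int_0^\infty(-F'(u))\,A_\Lambda(\sqrt u)\ud u,\qquad A_\Lambda(r):=\bigl|\,V(\Lambda)\cap B(0,r)\,\bigr|.
\end{equation*}
As $-F'\ge0$, the maximization of both norms reduces to the geometric claim that the equilateral lattice maximizes $A_\Lambda(r)$ for every $r>0$, strictly on an interval of radii unless $\Lambda$ is equilateral. Granting this, $J_\Lambda[F]\le J_{\mathrm{hex}}[F]$ for every positive non-increasing $F$, so the equilateral torus maximizes both norms; when $f$ is strictly decreasing the domination is strict and the equilateral torus is the unique maximizer.

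For the critical- and saddle-point assertions I would pass to the moduli description, viewing $J_\Lambda$ as a smooth modular-invariant function of $\tau=a+bi$. The square ($\tau=i$) and equilateral ($\tau=e^{i\pi/3}$) tori are the orbifold points whose stabilizers act on the tangent plane by rotation through $\pi$ and $2\pi/3$. Invariance of $\nabla J$ under these rotations forces $\nabla J=0$ at both points, so both are critical points with no computation required. At the equilateral point the order-three symmetry additionally forces the Hessian to be a scalar multiple of the identity, so a single sign computation identifies it as a local maximum, in agreement with the global statement; at the square point the order-two symmetry imposes no constraint on the Hessian, and I would compute the second variation of $J_\Lambda[F]$ and verify that it is indefinite when $F$ is strictly decreasing, giving the saddle point.

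The main obstacle is the geometric domination $A_\Lambda(r)\le A_{\mathrm{hex}}(r)$ for all $r$. The two extremes are easy: for small $r$ the disk lies inside every Voronoi cell, where $A_\Lambda(r)=\pi r^2$, and the equilateral cell remains in this regime longest because it has the largest inradius (shortest vector) among unit-covolume lattices; for large $r$ each $A_\Lambda(r)$ saturates at $|V|=1$, and the equilateral cell saturates first because it has the smallest circumradius (covering radius). Controlling the intermediate range uniformly over all lattice shapes is delicate, and this is precisely where the special two-dimensional fact that the hexagonal lattice is simultaneously the optimal packing and the optimal covering must enter. Should the clean pointwise statement prove elusive, an alternative is to abandon global domination in favor of combining the local maximality at the equilateral point with a proof that $J_\Lambda$ has no other interior critical point in $U$ and an analysis of its behavior on $\partial U$.
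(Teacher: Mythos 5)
Your reduction of both norms to the single functional $J_\Lambda[F]=\int_{V(\Lambda)}F(|\mathbf z|^2)\,d\mathbf z$ matches the paper's equations \eqref{eq:L2}, \eqref{eq:HS} and \eqref{eq:ObjDir}, and your layer-cake rewriting correctly identifies that everything hinges on the pointwise geometric domination $A_\Lambda(r)\le A_{\mathrm{hex}}(r)$ for all $r>0$ (which is precisely the Fejes T\'oth Moment Lemma, Theorem \ref{thm:MomLemma}, applied to the indicator kernels $\chi_{[0,r^2]}$). The difficulty is that you never prove this domination: you handle only the small-$r$ regime (inradius/packing) and the large-$r$ regime (circumradius/covering), and you explicitly concede that ``controlling the intermediate range uniformly over all lattice shapes is delicate.'' That intermediate range is the entire content of the theorem --- it is exactly what the paper establishes, either via the Moment Lemma's rearrangement of circular segments (Lemmas \ref{lem:Lemma1}--\ref{lem:Lemma2} and Theorem \ref{thm:MomLemma}) or, in the self-contained route, via the explicit monotonicity statements $\partial_a J>0$ for $a\in(0,1/2)$ (Lemma \ref{lem:Ja>0}) and $\partial_b J<0$ along $a=1/2$, $b>\sqrt3/2$ (Lemma \ref{lem:Jb<0}), which continuously rearrange the Voronoi hexagon into the regular one while increasing $J$. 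The strictness needed for uniqueness (that $A_\Lambda(r)<A_{\mathrm{hex}}(r)$ on an interval of radii whenever $\Lambda$ is not equilateral) is likewise asserted but not argued. As written, the proposal is a correct reformulation plus an acknowledged hole where the proof should be.

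Two smaller points. First, your symmetry argument for criticality is genuinely nicer than the paper's computation: the order-$2$ and order-$3$ stabilizers at $\tau=i$ and $\tau=e^{i\pi/3}$ act on the tangent space by rotations through $\pi$ and $2\pi/3$, forcing $\nabla J=0$ at both points (and forcing the Hessian at the equilateral point to be scalar), with no integrals to evaluate; this is a legitimate improvement, provided you justify that $J$ is differentiable in $(a,b)$ and invariant under the full modular action. Second, the saddle-point claim at the square torus is again only deferred (``I would compute the second variation''); to close it you would need the sign of $\partial_b^2 J$ along the rectangular line $a=0$ at $b=1$ (a maximum in $b$, by the symmetry $b\mapsto 1/b$ together with monotone degeneration as $b\to\infty$), to set against the minimum in the $a$-direction that follows from Lemma \ref{lem:Ja>0} and the reflection symmetry $a\mapsto-a$.
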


\begin{rem}
A self-contained proof of Theorem \ref{thm:main} is given in Section \ref{sec:proof}.   The result that the equilateral torus is a maximizer follows from the Moment Lemma of L\'aszl\'o Fejes T\'oth (see Remark \ref{MomentLemma}). As we were unable to find an English version of this lemma in the literature,  we provide a translated proof of in Appendix \ref{app:momlem}.  Here, we provide a self-contained, constructive proof of Theorem \ref{thm:main} that gives more precise insight into spectral optimization problems over the family of flat tori considered.  In particular, we emphasize that we observe the square torus as a saddle point of the reported spectral quantities and gain fairly precise control over the dependence on the parameters $(a,b)$.   
\end{rem}

To prove Theorem \ref{thm:main}, we obtain explicit expressions for $\| A_f \|_{L^2(T_{a,b})}$ and $\| A_f \|_{\mathrm{H.S.}}$ and then use a  rearrangement method to show that the equilateral torus is maximal. The proof involves the rearrangement of a six-sided polygon with some special properties. Generally speaking, rearrangement of polygons is difficult, as symmetrization typically destroys  polygonal structure, {\it e.g.}, Steiner symmetrization  introduces additional vertices. In fact, as far as we know, it remains an open problem to prove the conjecture of George P\`olya and G{\'a}bor Szeg{\H{o}} \cite[Section 7]{PolyaSzego1951}, that among all $N$-gons  of given area for fixed $N\geq 5$,  the regular one has the smallest  first Laplace-Dirichlet eigenvalue; see also   \cite[Open Problem 2]{Henrot:2006}. 
Theorem \ref{thm:main}  is most similar to a result of Marcel Berger, which shows that the maximum first eigenvalue of the Laplace-Beltrami operator  over all flat tori is attained only by the equilateral torus \cite{Berger1973,LapBel2013}.  See also the work of Baernstein \cite{baernstein1997minimum}, in which heat kernels are studied over flat tori. 
Finally, the present work can in some sense be seen as complementary to the lattice optimization problems of minimizing the Epstein zeta energy \cite{rankin1953,cassels1959,diananda1964,ennola1964} 
and the theta energy  \cite{Montgomery1988}, both of which are minimized by the triangular lattice.

%Finally, since one may argue that the equilateral torus is the most regular flat torus,  Theorem \ref{thm:main} contributes to the ``genetics of regular figures,'' as initiated in the classical work of L\'aszl\'o Fejes T\'oth \cite{toth1964}. 

\section{Proof of Theorem \ref{thm:main}} \label{sec:proof}
Let $B  \in \mathbb R^{2\times 2}$ have linearly independent columns. 
The lattice generated by the basis $B$ is the set  of integer linear combinations of the columns of $B$, 
$L(B) = B(\mathbb Z^2)$.
Let $B$ and $C$ be two lattice bases. We recall that $ L(B)$ and $ L(C)$ are \emph{isometric} if there is a unimodular\footnote{A  matrix $A \in \mathbb Z^{n\times n}$ is \emph{unimodular} if $\mathrm{det} A  = \pm 1$.}  
matrix $U$ such that $B = CU$. 
The following proposition gives a parameterization of the space of two-dimensional, unit-volume lattices modulo isometry. 

\begin{figure}[t]
\begin{center}
\begin{tikzpicture}[scale=.5,thick,>=stealth',dot/.style = {fill = black,circle,inner sep = 0pt,minimum size = 4pt}]

%fill region first
 \filldraw[fill=blue!6]  (0,10) arc(90:60:10cm)  -- (5,17) [dashed] -- (0,17) -- cycle;

% coordinate axis
\draw[->] (-3,0) -- (13,0) coordinate[label = {below:$a$}];
\draw[->] (0,-.1) coordinate[label={below:$0$}] -- (0,17) coordinate[label = {left:$b$}];

% lines
\draw (10,.1) -- (10,-.1) coordinate[label={below:$1$}]; % x tick
\draw (5,-.1)coordinate[label = {below:$0.5$}] -- (5,17) coordinate(top); % vertical line
%\draw (10,0) arc (0:60:10) arc (60:90:10) node[midway]{\begin{tabular}{c} rhombic \\lattices\end{tabular}}{} arc(90:110:10) ; % arc
\draw (10,0) arc (0:110:10); % arc
 
 % label regions 
 \draw (5,8.660254) node[dot,label={right: \begin{tabular}{c} equilateral \\torus \end{tabular}}](triLat){};
 \draw (0,10) -- (0,13) coordinate[label={center:\rotatebox{90}{\begin{tabular}{c}rectangular\\ tori \end{tabular}}}]{} --(0,17) ;
 %\draw (2.5,12) node[label={\begin{tabular}{c} oblique \\lattices\end{tabular}}]{}; 
 \draw (0,10) node[dot,label={below left:\begin{tabular}{c} square \\ torus \end{tabular}}]{};

\end{tikzpicture}
\caption{The set $U$, defined in \eqref{eqn:U}, is shaded.}
\label{fig:FundDom}
\end{center}
\end{figure}

\begin{prop} \label{prop:LatticeParam}
Every two-dimensional lattice with volume one is isometric to a lattice, $L_{a,b}$, parameterized by the basis 
$$ B_{a,b} = \begin{pmatrix} \frac{1}{\sqrt b} & \frac{a}{\sqrt b} \\ 0 & \sqrt b \end{pmatrix}, $$ 
where the parameters $a, b \in U$ and $U$ is defined in \eqref{eqn:U} and illustrated in Figure \ref{fig:FundDom}.
%are constrained to the set
%$$
%U := \left\{ (a,b) \in \mathbb R^2 \colon b>0, \ a \in [ 0,1/2  ], \ \text{and} \ a^2 + b^2 \geq 1 \right\}.
%$$ 
\end{prop}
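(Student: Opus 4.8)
The plan is to prove this by the classical reduction theory of two-dimensional lattices (Gauss--Lagrange reduction), which is equivalent to exhibiting the standard fundamental domain for the action of the modular group on the upper half-plane, with $\tau = a + ib$ playing the role of the half-plane coordinate. Since the proposition asserts only the \emph{existence} of a reduced representative (not uniqueness, which would require analyzing the boundary identifications of $U$), it suffices to show that every unit-volume lattice can be brought to the form $L(B_{a,b})$ with $(a,b)\in U$ by an orthogonal change of coordinates composed with a unimodular change of basis.

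First I would fix a unit-volume lattice $L$, choose a nonzero vector $v_1\in L$ of minimal length $\ell = |v_1|$, and then choose $v_2\in L\setminus \mathbb Z v_1$ of minimal length among vectors independent from $v_1$. The key input is that in dimension two these two successive minima form a basis of $L$; I would isolate this as a lemma. Applying an orthogonal transformation $Q\in O(2)$, which is an isometry, I may assume $v_1 = (\ell,0)$ points along the positive horizontal axis, and after a reflection across that axis that $v_2$ has positive second coordinate. Identifying $\mathbb R^2$ with $\mathbb C$ and writing $\tau = v_2/v_1 =: \alpha + i\beta$ with $\beta > 0$, the global minimality of $v_1$ gives $|v_2| \ge |v_1|$, i.e. $|\tau|\ge 1$ and hence $\alpha^2 + \beta^2 \ge 1$; subtracting an integer multiple of $v_1$ from $v_2$ (a unimodular operation preserving $L$) lets me arrange $\alpha \in [-\tfrac12,\tfrac12]$ while keeping $|\tau|\ge 1$, and a final reflection across the vertical axis puts $\alpha \in [0,\tfrac12]$.

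At this point $\tau = \alpha + i\beta$ satisfies precisely the defining conditions of $U$ in \eqref{eqn:U}. Writing the rotated, reduced basis in matrix form gives columns $v_1 = (\ell,0)$ and $v_2 = (\ell\alpha, \ell\beta)$, so that $\det B = \ell^2\beta$; imposing the unit-volume normalization $\ell^2\beta = 1$ forces $\ell = 1/\sqrt{\beta}$ and hence
\[
B = \begin{pmatrix} 1/\sqrt{\beta} & \alpha/\sqrt{\beta} \\ 0 & \sqrt{\beta}\end{pmatrix},
\]
which is exactly $B_{a,b}$ with $(a,b) = (\alpha,\beta) \in U$.

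I expect the only genuine obstacle to be the lemma that the two successive minima form a basis: this is where dimension two is essential, since the analogous statement fails once the dimension is large enough, so the argument must use the planar geometry rather than a dimension-free principle. I would handle it by the standard contradiction: if $v_1,v_2$ did not generate $L$, there would be a lattice point strictly inside the half-open parallelogram they span, and a short case analysis on its two coordinates produces a lattice vector shorter than $v_2$ (or than $v_1$), contradicting the choice of successive minima. Everything else---the orthogonal normalization, the reduction of $\alpha$ modulo $v_1$, and the scaling---is routine bookkeeping.
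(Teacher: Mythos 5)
Your proof is correct and follows essentially the same route as the paper's: both are the classical Lagrange--Gauss reduction producing the standard fundamental domain for unit-volume lattices, with the rotation/reflection supplying the isometry and the unimodular shear supplying the restriction of $a$ to $[0,1/2]$. If anything you are more complete than the paper, whose appendix only sketches the reduction (starting from an assumed reduced basis with acute angle and deferring details to the literature), whereas you isolate and prove the key two-dimensional lemma that the successive minima generate the lattice.
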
  

For $(a,b) \in U$, we refer to $L_{a,b} = B_{a,b}(\mathbb Z^2)$ as the lattice generated by vector $(a,b)$.  A proof of this Proposition \ref{prop:LatticeParam} is standard, but for completeness is provided in Appendix \ref{sec:LatPara}. 

A \emph{flat torus} is a torus with a metric inherited from its representation as the quotient $\mathbb R^2 / L$, where $L$ is a lattice. Tori are isometric iff the matrices of the generating lattices are equivalent via left multiplication by an orthogonal matrix, see \cite{wolpert}.  Hence, by Proposition \ref{prop:LatticeParam} a parameterization of the unit volume flat tori is given by 
$$
T_{a,b} = \mathbb R^2 / L_{a,b}, \qquad (a,b) \in U. 
$$

Let $\Lambda_{a,b} = B_{a,b}^{-t}(\mathbb Z^2)$ be the dual lattice of $L_{a,b}$, namely the set of all vectors whose inner products with each vector in $L_{a,b}$ is an integer,
\[
\Lambda_{a,b}  = \{ \y \in \mathbb{R}^2 \colon    \y \cdot \x \in \mathbb{Z} \ \text{for all} \ \x \in L_{a,b}  \};
\]  see for instance \cite[Chapter $XIII.16$]{RSv4}. 
For $\k \in \Lambda_{a,b}$, a computation using lattice Fourier analysis shows that
$$
A_f \ e^{\imath \k\cdot\x} = \int_{T_{a,b}} f\left(d^2(\y,\c)\right) e^{-\imath \k \cdot \y } d\y  \ e^{\imath \k \cdot \x} \equiv \gamma_f(\k) \ e^{\imath \k \cdot \x}, 
$$
where $\c=\c_{a,b}$ is the center of $T_{a,b}$.  For the representative parallelogram of $T_{a,b}$ with $(0,0)$ in the bottom left corner, we have $\c_{a,b} = \left( (a+1)/(2 \sqrt{b}), 1/(2 \sqrt{b}) \right)$. 
Thus $A_f$ is diagonalized by the Fourier Transform with eigenvalues given by $ \gamma_f(\k)$, $\k \in \Lambda_{a,b}$. We observe that 
$$
\gamma_f(\k) 
= \int_{T_{a,b}} f\left(d^2(\x,\c)\right) e^{-\imath \k \cdot \x } d\x \leq 
\int_{T_{a,b}} f\left(d^2(\x,\c)\right) d\x = \gamma_f(\mathbf{0}),
$$
and thus the largest eigenvalue of $A_f$ is given by $ \gamma_f(\mathbf{0})$, 
\begin{equation} \label{eq:L2}
\| A_f \|_{L^2(T_{a,b})} = \int_{T_{a,b}} f\left(d^2(\x,\c)\right) d\x. 
\end{equation}
Equation \eqref{eq:L2} also follows from Young's inequality. 
We also compute 
\begin{align} \label{eq:HS}
\| A_f \|_{\mathrm{H.S.}} 
=  \int_{T_{a,b}}  f^2\left(d^2( \x,\c ) \right) \ d\x,
\end{align}
where we used the assumption that $|T_{a,b}| = 1$. 

From \eqref{eq:L2} and \eqref{eq:HS}, the proof of Theorem \ref{thm:main} requires us to solve the following optimization problem 
\begin{equation} \label{eq:OptProb}
\max_{(a,b) \in U}  \ J(a,b) \ \ \text{where } \ J(a,b) :=\int_{T_{a,b}} f\left(d^2(\x, \c)\right)  \ d\x. 
\end{equation}

We tile the plane with the torus $T_{a,b}$ and consider the Dirichlet-Voronoi  cell for the origin, $D_{a,b}$, defined as the set of points closer to the origin than any of the other lattice points.   We define 
\begin{align}
 \label{y1}
y_1(x) &:= \frac{1}{2\sqrt b} \left( \frac{(1-a)^2}{b} + b \right) + \frac{1-a}{b}x,  \\
 \label{y2}
y_2(x) &:= \frac{1}{2\sqrt b} \left( \frac{a^2}{b} + b \right) - \frac{a}{b}x, 
\end{align}
and
\begin{equation}
x_1 := \frac{1}{2\sqrt{b}},  \quad  x_2:= \frac{a-1/2}{\sqrt{b}}.
\end{equation}
The Dirichlet-Voronoi  cell for the origin can be explicitly written 
\begin{align*}
D_{a,b} = \{(x,y)\in \mathbb R^2 \colon  
& x \in [-x_1,x_1],  \ y\leq y_1(x) \ \forall x \in [-x_1,x_2], \ y\leq y_2(x) \ \forall x \in [x_2, x_1], \\ 
& \ y\geq -y_2(-x) \ \forall x \in [-x_1,-x_2], \ y\geq -y_1(-x) \ \forall x \in [-x_2,x_1]\}. 
\end{align*}
This Dirichlet-Voronoi cell is illustrated in Figure \ref{fig:DCell} for a square torus (left), a torus $T_{a,b}$ with  $(a,b) = \left(0.2,1.2\right)$ (center), and an equilateral torus (right).  The vertices of the Dirichlet-Voronoi cell are concyclic; the Dirichlet-Voronoi cell is a cyclic polygon with four vertices if $a=0$ and six vertices if $a \in (0,1/2]$. The inradius is given by 
$$r_1 := \frac{1}{2\sqrt{b}}$$ 
and the circumradius is given by 
$$
r_2:= \sqrt{\frac{(a^2+b^2)\left( (a-1)^2 + b^2\right)}{4 b^3}} > r_1.
$$

\begin{figure}[t!]
\begin{center}
\hspace{-1.7cm}
\includegraphics[width=.39\textwidth]{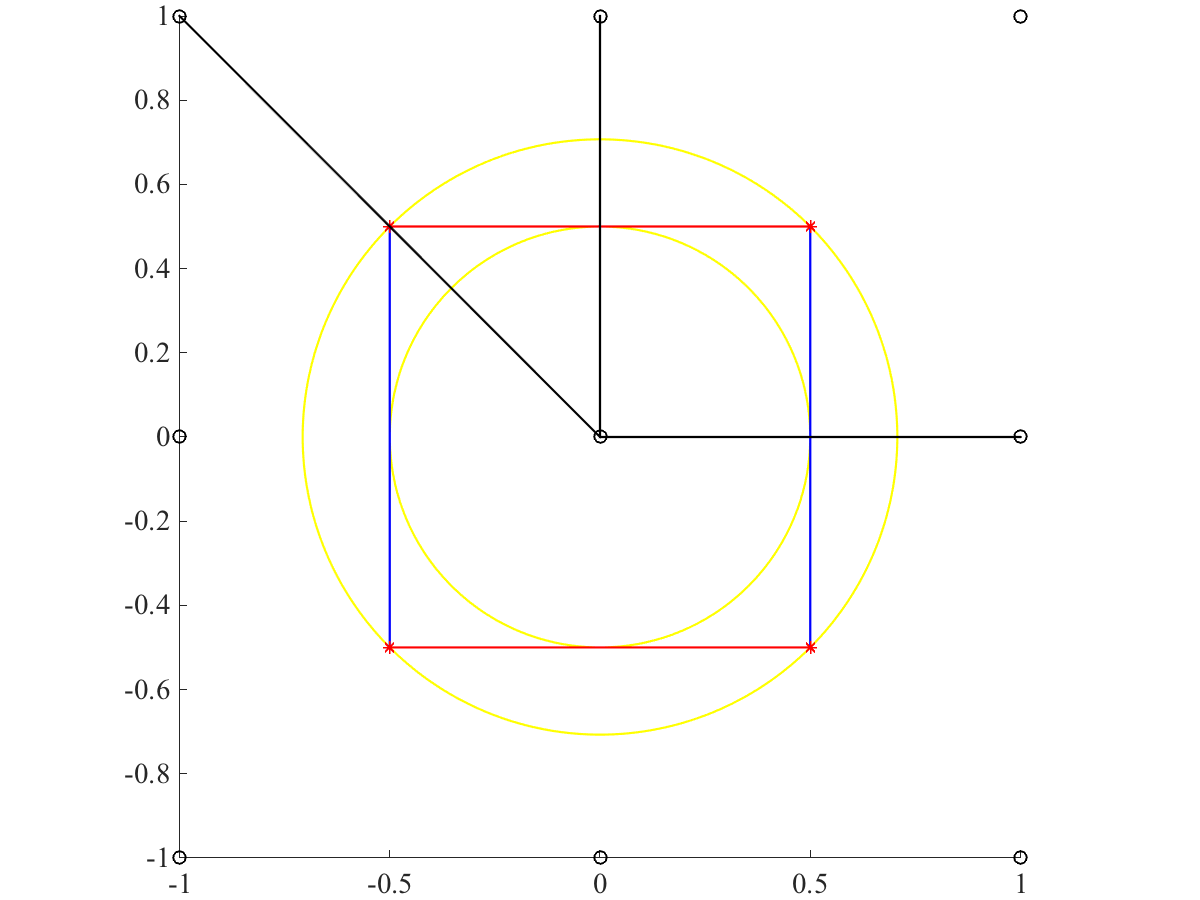}  \hspace{-1cm}
\includegraphics[width=.39\textwidth]{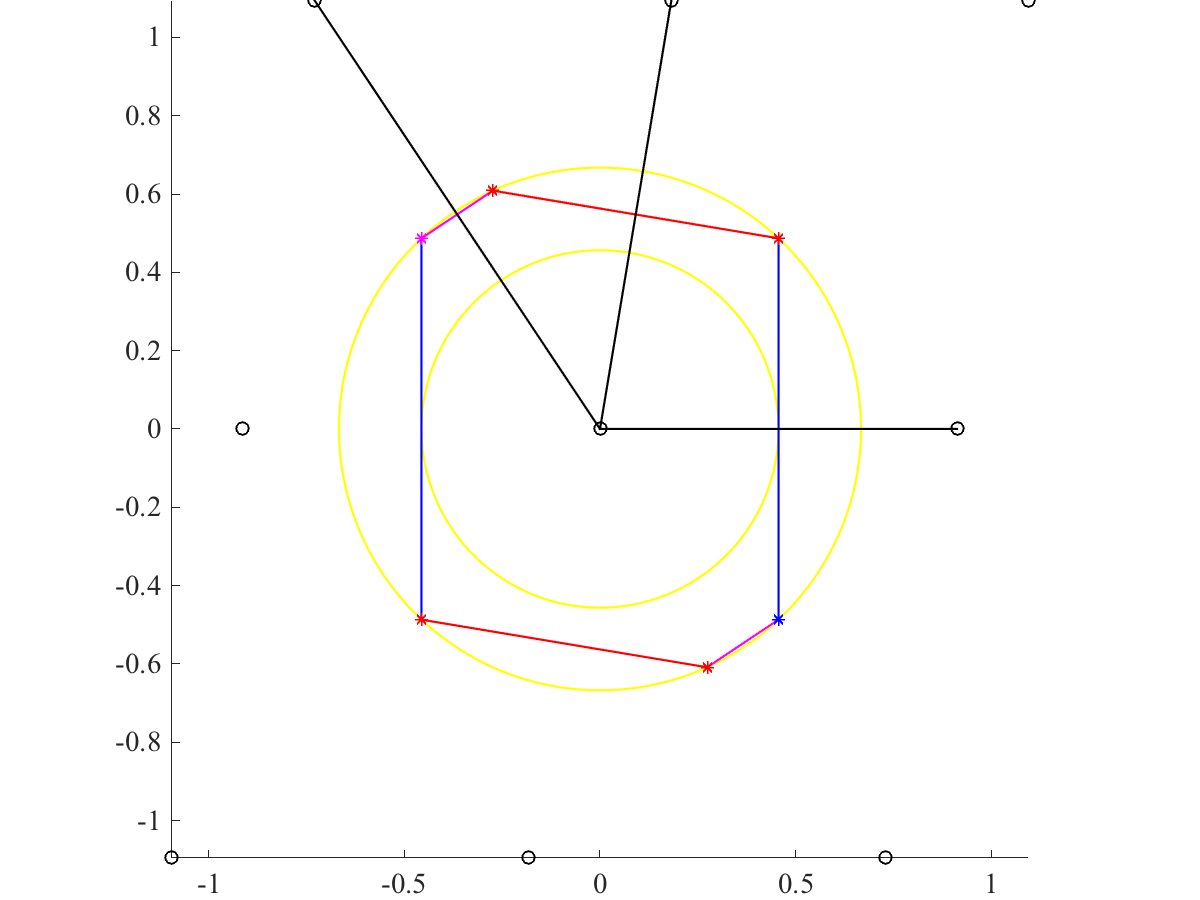} \hspace{-1cm}
\includegraphics[width=.39\textwidth]{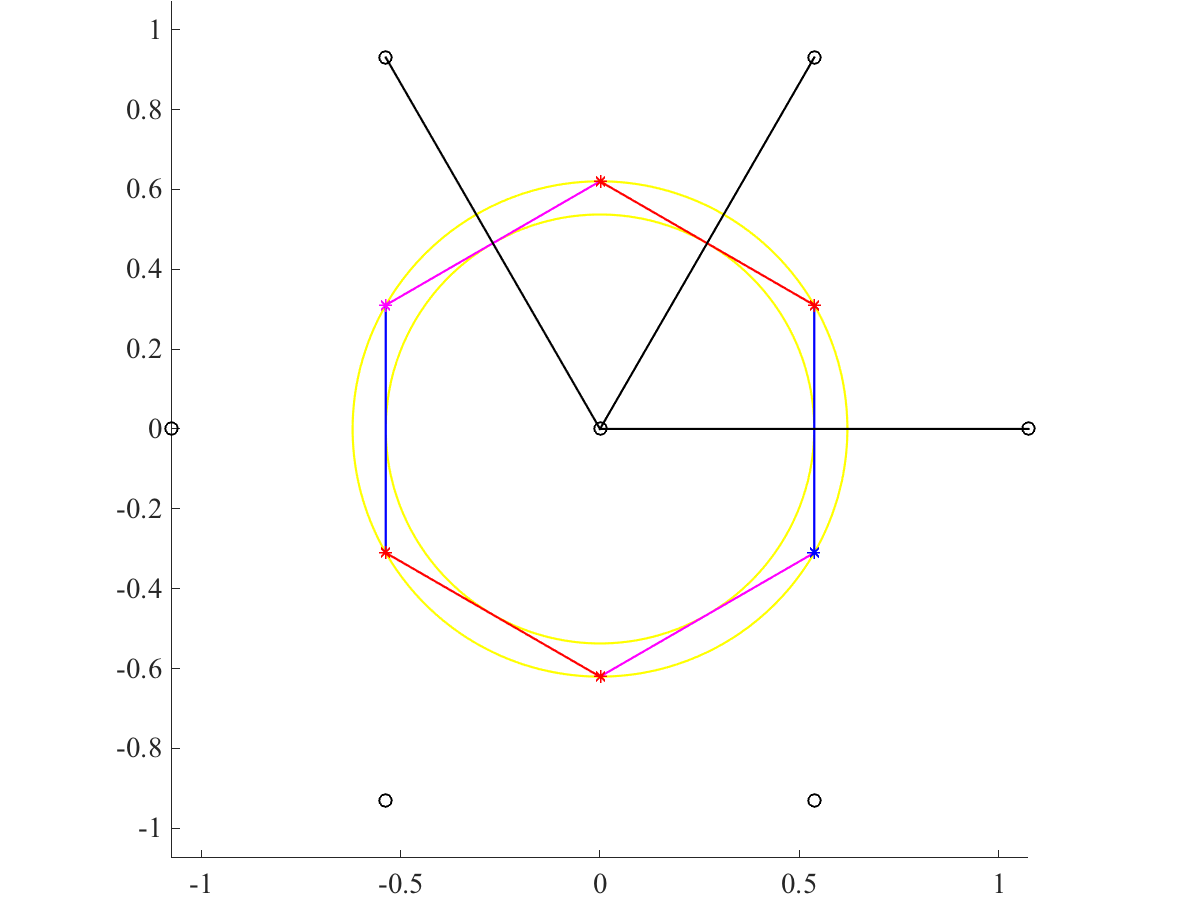}\hspace{-1.7cm}
\caption{An illustration of $D_{a,b}$, the Dirichlet-Voronoi cell of the origin, 
 for a square torus {\bf (left)}, a torus with  $(a,b) = \left(0.2,1.2\right)$ {\bf (center)}, and an equilateral torus {\bf (right)}. 
 The vertices of the Dirichlet-Voronoi cell are concyclic and both the incircle and circumcircle are plotted. }
\label{fig:DCell}
\end{center}
\end{figure}

For $(a,b) \in U$, the objective function, $J(a,b)$,  in \eqref{eq:OptProb} can be rewritten as an integral over the Dirchlet cell  where the distance is now simply the Euclidean distance to the origin,
\begin{equation} \label{eq:ObjDir}
J(a,b)
= \int_{D_{a,b}} f\left( \| \x \|^2 \right) \ d\x , \qquad \qquad (a,b) \in U.
\end{equation}

\begin{rem} \label{MomentLemma}
From \eqref{eq:ObjDir}, Theorem \ref{thm:main} now follows from the Moment Lemma of L\'aszl\'o Fejes T\'oth, see Theorem \ref{thm:MomLemma} or, for example,  \cite[p. 198]{toth1953}, \cite{toth1973}, or \cite{gruber1999}, \cite{BoroczkyCsikos2010}.  Namely, if $f$ is a non-increasing function and $D(\mathbf{p})$ any Dirichlet-Voronoi cell  of a point $\mathbf{p}\in \mathbb R^2$ with 6 vertices, we have 
$$
\int_{D(\mathbf{p})} f\left( \| \x - \mathbf{p} \|^2 \right) \ d\x \leq \int_{R} f\left( \| \x \|^2 \right) \ d\x,
$$
where $R$ is the regular $6$-gon centered at the origin with $|R| = |D(\mathbf{p})|$. This  result is intuitive; since $f$ is a positive and decreasing function, the optimal torus will be described by the parameters $(a,b) \in U$ such that the Dirichlet-Voronoi cell is most concentrated about the origin. The equilateral lattice has the Dirichlet-Voronoi cell with the most symmetry, so intuitively it is the maximizer. 
\end{rem}

\begin{rem} \label{rem:dec} We will prove the statement in Theorem \ref{thm:main} which assumes that $f$ is decreasing; the statement for $f$ non-increasing can be proved similarly. 
\end{rem}

We  complete  the proof by showing that when we continually rearrange the Dirichlet-Voronoi cell, first increasing $a$ and then decreasing $b$ while keeping $(a,b) \in U$, the objective function is strictly increasing. This implies that the maximum is attained uniquely by the equilateral torus. We proceed with a few preliminary  results, and then put them together to finish the proof. 

We use the polar symmetry ($\x\in T_{a,b} \implies -\x \in T_{a,b}$) to reduce the integral in \eqref{eq:ObjDir} to the upper half plane. We obtain 
\begin{equation} \label{eq:ObjDir2}
J(a,b) = 2 \int_{-x_1}^{x_2} \int_{0}^{y_1(x)} f\left(x^2 + y^2 \right) \ dy dx +
 2 \int_{x_2}^{x_1} \int_{0}^{y_2(x)} f\left(x^2 + y^2 \right) \ dy dx, \quad (a,b) \in U. 
\end{equation}
We  compute the partial derivatives of $J(a,b)$. The partial derivative of $J(a,b)$ with respect to the parameter $a$, is given by 
\begin{align}
 \partial_a \frac{1}{2}  J(a,b)  \notag
&= \int_{-x_1}^{x_2} f\left(x^2 + y_1^2(x)\right) \left( \partial_a y_1(x) \right) dx 
+ \int_{x_2}^{x_1} f\left(x^2 + y_2^2(x)\right) \left( \partial_a y_2(x) \right) dx \notag \\
&=\frac{1}{b}  \int_{-x_1}^{x_2} f\left(x^2 + y_1^2(x)\right) \left( \frac{a-1}{\sqrt b} - x \right) dx 
+ \frac{1}{b} \int_{x_2}^{x_1} f\left(x^2 + y_2^2(x)\right) \left( \frac{a}{\sqrt b} - x \right) dx \notag \\
&=-\frac{1}{b}  \int_{-x_1}^{x_2} f\left(x^2 + y_1^2(x)\right) \left( x- \frac{a-1}{\sqrt b}  \right) dx  \label{paJ} \\
& \quad + \frac{1}{b} \int_{x_2}^{\frac {a} {\sqrt b}} f\left(x^2 + y_2^2(x)\right) \left( \frac{a}{\sqrt b} - x \right) dx 
- \frac{1}{b} \int_{\frac {a} {\sqrt b}}^{x_1} f\left(x^2 + y_2^2(x)\right) \left( x -\frac{a}{\sqrt b}  \right) dx  . \notag 
\end{align}
The partial derivative of $J(a,b)$ with respect to the parameter $b$ is
\begin{align} 
 \partial_b \frac{1}{2}  J(a,b)  
=&  -\frac{1}{2 b^{\frac 3 2}} \int_{0}^{y_2(x_1)} f\left(\frac{1}{4b}  + y^2\right)  dy \nonumber \\
& +\int_{-x_1}^{x_2} f\left(x^2 + y_1^2(x)\right) \left( \partial_b y_1(x) \right) dx 
 + \int_{x_2}^{x_1} f\left(x^2 + y_2^2(x)\right) \left( \partial_b y_2(x) \right) dx. \label{eq:Jb}
\end{align}

\begin{lem}
For any integrable function $f$, $\partial_a J(a,b) = 0$ if either $a=0$ or $a= \frac{1}{2}$. Furthermore, the square $(a,b) = (0,1)$ and equilateral torus $(a,b) = \left(\frac 1 2, \frac{\sqrt{3}}{2} \right)$  are critical points of $J(a,b)$. 
\end{lem}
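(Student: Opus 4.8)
The plan is to treat the two assertions separately. The vanishing of $\partial_a J$ along the lines $a=0$ and $a=\frac{1}{2}$ will follow directly from the explicit formula \eqref{paJ}, using nothing about $f$ beyond integrability. The criticality of the square and equilateral tori will then reduce to establishing $\partial_b J = 0$ at those two points, and for that I would invoke an extra symmetry rather than attempt a direct evaluation of \eqref{eq:Jb}.

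For the first assertion I would substitute the special values into \eqref{paJ} and exploit that the boundary data $y_1,y_2,x_1,x_2$ degenerate. At $a=0$ one has $x_2=-x_1$, so the first integral in \eqref{paJ} is over an empty interval and vanishes; moreover $y_2(x)\equiv \sqrt b/2$ is constant, so the integrand $f(x^2+y_2^2(x))$ is even in $x$, and the substitution $x\mapsto -x$ identifies the second integral (over $[-x_1,0]$) with the third (over $[0,x_1]$) up to sign, so they cancel and $\partial_a J(0,b)=0$. At $a=\frac{1}{2}$ one has $x_2=0$ and $a/\sqrt b=x_1$, so now the third integral is over an empty interval; the key structural fact here is the reflection identity $y_1(-x)=y_2(x)$, which holds because the two lines \eqref{y1}--\eqref{y2} share the same intercept and have opposite slopes precisely when $a=\frac{1}{2}$. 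Substituting $x\mapsto -x$ in the first integral and applying this identity turns it into the negative of the second, so again the two cancel and $\partial_a J(\frac{1}{2},b)=0$.

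For the second assertion, since the square torus lies on $a=0$ and the equilateral torus on $a=\frac{1}{2}$, the first part already gives $\partial_a J=0$ at both, so it remains only to show $\partial_b J=0$ there. Rather than evaluate \eqref{eq:Jb}, I would use that $J$ is an isometry invariant of the torus and hence invariant under the full modular action on the parameter. Writing $\tau=a+\imath b$, the basis $B_{a,b}$ has modular parameter $\tau$, and the isometry criterion quoted after Proposition \ref{prop:LatticeParam} shows that $J$, viewed as a smooth function of $\tau$ on the upper half plane, satisfies $J(g\tau)=J(\tau)$ for every $g$ in the modular group. The square torus is the fixed point $\tau=\imath$ of $\tau\mapsto -1/\tau$, whose differential there is multiplication by $1/\tau^2=-1$, i.e. $-\mathrm{Id}$ on the tangent plane; invariance forces $\nabla J(\imath)=-\nabla J(\imath)$ and hence $\nabla J=0$. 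The equilateral torus is the fixed point $\tau=e^{\imath\pi/3}$ of $\tau\mapsto 1-1/\tau$, whose differential is multiplication by $e^{-2\imath\pi/3}$, a rotation by $2\pi/3$; averaging the identity $\nabla J=R\,\nabla J$ over the three powers of this rotation, whose sum $I+R+R^2$ is zero, forces $\nabla J(e^{\imath\pi/3})=0$. Either way both partials vanish, so both tori are critical points.

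The main obstacle is the $\partial_b J=0$ claim. A purely computational route through \eqref{eq:Jb} is unappealing, since those integrals do not simplify without knowing $f$; the symmetry argument circumvents this but requires two points of care. First, one must check that $J$ extends to a genuinely smooth function of $(a,b)$ in a full neighborhood of each point in the open upper half plane, so that the chain rule applies and the one-sided derivative computed from within $U$ coincides with the true gradient; this is legitimate because the arc $a^2+b^2=1$ is only the boundary of the chosen fundamental domain $U$, not a singularity of $J$. Second, one must correctly identify the linearizations of the two modular maps at their fixed points with the rotations above, which is the standard fact that a holomorphic map has real Jacobian equal to multiplication by its complex derivative, here of unit modulus. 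I would also remark that for the square torus there is an elementary one-dimensional version: along $a=0$ the map $\tau\mapsto -1/\tau$ restricts to $b\mapsto 1/b$, so $J(0,b)=J(0,1/b)$, and differentiating at the fixed point $b=1$ gives $\partial_b J(0,1)=0$ at once.
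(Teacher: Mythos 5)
Your proof of the first assertion is essentially the paper's: at $a=0$ the first integral in \eqref{paJ} collapses to an empty interval and the remaining two pieces combine into the integral of an odd function of $x$, and at $a=\tfrac12$ the reflection identity $y_1(-x)=y_2(x)$ makes the two surviving pieces cancel under $x\mapsto -x$; both computations are correct. For the criticality of the two distinguished tori you genuinely diverge from the paper, which evaluates $\partial_b J$ directly from \eqref{eq:Jb}: at $(0,1)$ by inspection, and at $\left(\tfrac12,\tfrac{\sqrt3}{2}\right)$ by completing the square $x^2+y_2^2(x)=\tfrac{1}{b^2}\left(x-\tfrac{x_1}{2}\right)^2+\tfrac{1}{4b}$ and changing variables so that the two terms of \eqref{eq:Jb} cancel exactly. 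Your modular-invariance argument is sound and more conceptual: $J$ is an isometry invariant, hence invariant under $PSL(2,\mathbb Z)$ acting on $\tau=a+\imath b$, and the square and equilateral tori are precisely the orbifold points fixed by elements whose linearizations are $-\mathrm{Id}$ and a rotation by $2\pi/3$, which forces $\nabla J=0$. What this buys is an explanation of \emph{why} these points are critical, valid for any isometry-invariant functional and with no integral manipulation; what it costs is the obligation to verify that $J$ extends differentiably to a full two-dimensional neighborhood of each point in the upper half plane, so that the one-sided derivatives taken from within $U$ agree with the true gradient. At the equilateral point this is unproblematic, since the Dirichlet-Voronoi cell is a combinatorially stable hexagon nearby; but at $\tau=\imath$ the cell degenerates from a hexagon to a rectangle as $a\to 0$, so joint differentiability there is essentially as delicate as the statement $\partial_a J(0,1)=0$ itself. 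Your elementary fallback, namely $J(0,b)=J(0,1/b)$ differentiated at the fixed point $b=1$, needs only differentiability along the rectangular line $a=0$ and, combined with your direct computation of $\partial_a J(0,1)=0$, closes that gap cleanly. The paper's computational route stays entirely inside $U$ and sidesteps these regularity questions, at the price of the explicit cancellation argument at the equilateral point.
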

\begin{proof}
Setting $a=0$ we have that $x_2 = - x_1$ and $y_2(x) = \frac{\sqrt b}{2}$. From  \eqref{paJ}, we obtain 
$$
 \partial_a \frac{1}{2}  J(a,b) = - \frac{1}{b} \int_{-x_1}^{x_1} f\left(x^2 + \frac{b}{4}\right) \ x \ dx = 0.
$$
If, additionally $b=1$, we have from \eqref{eq:Jb} that 
$$
 \partial_b \frac{1}{2}  J(a,b) =- \frac{1}{2} \int_{0}^{x_1} f\left(\frac{1}{4} + y^2  \right) dy + \int_{-x_1}^{x_1} f\left(x^2 + \frac{1}{4} \right) \frac{1}{4} dx = 0.
$$

For $a = \frac{1}{2}$, we have $x_2 = 0$ and $y_2(x) = \frac{1}{2\sqrt{b} } \left( \frac{1}{4b} + b \right) - \frac{1}{2b} x = y_1(-x)$.  
From  \eqref{paJ}, we obtain 
\begin{align*}
 \partial_a \frac{1}{2}  J(a,b)  &=- \frac{1}{b}  \int_{-x_1}^{0} f\left(x^2 + y_1^2(x)\right) \left( \frac{1}{2\sqrt b} + x \right) dx 
+ \frac{1}{b} \int_{0}^{x_1} f\left(x^2 + y_1^2(-x)\right) \left( \frac{1}{2\sqrt b} - x \right) dx \\
&=0.
 \end{align*}
If, additionally $b=\frac{\sqrt 3}{2}$, then we have  
$y_2(x) = \frac{x_1}{b} - \frac{1}{2b} x = y_1(-x)$ and
$$\partial_b y_2(x) =  \frac{1}{2b^2}x = (\partial_b y_1) (-x). $$
Hence, 
\begin{align*}
 \partial_b \frac{1}{2}  J(a,b) 
= & -\frac{1}{2 b^{\frac 3 2}} \int_{0}^{\frac{x_1}{2b}} f\left(\frac{1}{4b} + y^2\right)  dy + \frac{1}{b^2} \int_{0}^{x_1} f\left(x^2 + y_2^2(x)\right) x dx. 
 \end{align*}
Looking at the second integral, we complete the square in the argument of $f$, 
$$
x^2 + y_2^2(x) = \frac{1}{b^2} \left( x - \frac{x_1}{2} \right)^2 + \frac{1}{4b}. 
$$
Now letting $z =  \frac{1}{b} \left( x - \frac{x_1}{2} \right)$, the second integral can then be rewritten 
\begin{align*}
 \frac{1}{b^2} \int_{0}^{x_1} f\left( x^2 + y_2^2(x) \right) x dx &=   \frac{1}{b^2} \int_{-\frac{x_1}{2b}}^{\frac{x_1}{2b}} f\left(  \frac{1}{4b} + z^2 \right)  \left( bz + \frac{x_1}{2}\right)  b dz \\
 &=  \frac{x_1}{b}  \int_{0}^{\frac{x_1}{2b}} f\left(  \frac{1}{4b} + z^2 \right)   dz, 
\end{align*}
which perfectly cancels the first integral. Thus, $ \partial_b  J(a,b) =0$. 
\end{proof}

\begin{lem} \label{lem:Ja>0}
For any positive and decreasing function $f$, $\partial_a J(a,b) >  0$ for $(a,b) \in U$ with $a\in(0,1/2)$. 
\end{lem}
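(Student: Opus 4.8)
The plan is to start from the explicit formula \eqref{paJ} for $\partial_a \tfrac12 J(a,b)$, multiply through by $b$, and collapse the three integrals onto a common variable by the linear substitutions $s = \tfrac{a}{\sqrt b} - x$, $s = x - \tfrac{a}{\sqrt b}$, and $s = x - \tfrac{a-1}{\sqrt b}$ in the first, second, and third integrals respectively. Writing $p := \tfrac{a}{\sqrt b}$, $q := \tfrac{a-1}{\sqrt b}$, $m := \tfrac{1/2-a}{\sqrt b}$ (so that $x_1 = m+p$) and $f_i(x) := f\!\left(x^2 + y_i^2(x)\right)$, each integrand then carries the common positive weight $s\,ds$, and after splitting the first integral at $s=m$ I would obtain
\[
b\,\partial_a\tfrac12 J = \int_0^{m}\bigl[f_2(p-s)-f_2(p+s)\bigr]\,s\,ds + \int_m^{x_1}\bigl[f_2(p-s)-f_1(q+s)\bigr]\,s\,ds .
\]
Since $s\,ds>0$ on the interior (and $a\in(0,1/2)$ guarantees $0<m<x_1$), it suffices to show each bracket is positive, which I would reduce to comparing the squared distances $r_i^2(x) := x^2 + y_i^2(x)$ of the corresponding boundary points to the origin, using that $f$ is strictly decreasing.

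For the first bracket I would use that on the edge $y_2$ the function $r_2^2$ is a quadratic in $x$ minimized at the foot of the perpendicular from the origin, namely $x = p/2$; hence $r_2^2$ is symmetric about $p/2$, and since $|p/2-s|<|p/2+s|$ for $s>0$ we get $r_2^2(p-s)<r_2^2(p+s)$, so $f_2(p-s)>f_2(p+s)$. The second bracket is the crux, as it compares points on two different edges. I would study $g(s) := r_1^2(q+s) - r_2^2(p-s)$, which is a quadratic in $s$ with positive leading coefficient $(\beta-\alpha)/b^2$, where $\alpha := a^2+b^2$ and $\beta := (1-a)^2+b^2$ (note $\beta>\alpha$ since $a<\tfrac12$), so $g$ is convex. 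At $s=x_1$ the two evaluation points collapse to the shared vertex $x_2$ of the two edges, giving $g(x_1)=0$, and a short computation yields $g'(x_1) = b^{-5/2}(1-2a)\bigl(a-a^2-b^2\bigr)$. Convexity then reduces everything to the sign of $g'(x_1)$: if $g'(x_1)<0$, then $g'$ is increasing with $g'(s)<g'(x_1)<0$ for $s<x_1$, so $g$ is strictly decreasing on $(-\infty,x_1]$, whence $g(s)>g(x_1)=0$ on $[m,x_1)$ and $f_2(p-s)>f_1(q+s)$ there.

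The hard part — and the step where the hypothesis $(a,b)\in U$ is essential — is exactly the sign of $g'(x_1)$. Here I would invoke the defining constraint $a^2+b^2\ge 1$: together with $a<\tfrac12$ it gives $1-2a>0$ and $a-a^2-b^2 = a-(a^2+b^2)\le a-1<0$, so $g'(x_1)<0$, as needed. Assembling the pieces, both brackets are strictly positive on sets of positive measure (using that $f$ is strictly decreasing), and therefore $\partial_a J(a,b)>0$. I expect the only delicate points to be the bookkeeping of the substitution limits (so that the three integrals genuinely share the weight $s\,ds$ and split cleanly at $s=m$) and the verification that the vertex coincidence forces $g(x_1)=0$; the geometric input that the two edges meet at $x_2$ and that each $r_i^2$ is a perfect parabola about its own perpendicular foot is what makes the cross-edge comparison tractable.
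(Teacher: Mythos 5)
Your argument is correct -- I checked the key steps: the change of variables does yield your displayed identity for $b\,\partial_a \tfrac12 J$ exactly (though the list of substitutions in your prose is cyclically permuted relative to the order of the three integrals in \eqref{paJ}; the one over $[-x_1,x_2]$ needs $s=x-\tfrac{a-1}{\sqrt b}$, etc.), the parabola $x^2+y_2^2(x)$ is indeed minimized at $x=p/2$, the identity $y_1(x_2)=y_2(x_2)$ gives $g(x_1)=0$, and $g'(x_1)=b^{-5/2}(1-2a)(a-a^2-b^2)<0$ on $U$. Your proof follows the same overall strategy as the paper's -- reduce \eqref{paJ} to a measure-preserving pairing of boundary points and compare the arguments of the decreasing $f$ -- but the decomposition is genuinely different. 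The paper rescales $I_1,I_2,I_3$ to $[-1,1]$ with common weight $(1-z)\,dz$ and splits the \emph{density} of $I_2$ via $4a(1-a)+(1-2a)^2=1$, comparing all of $I_1$ against a $4a(1-a)$-fraction of $I_2$ (concavity of $A_1-A_2$ plus endpoint checks, using $b^2\ge 3a^2$) and all of $I_3$ against the complementary fraction (a pointwise inequality $A_3>A_2$). You instead split the \emph{domain} of $I_2$ at $s=m$: the piece of the edge $y_2$ near $x=p$ is paired with $I_3$ by reflection through the perpendicular foot $p/2$, and the piece near the shared vertex $x_2$ is paired with $I_1$ via the convex quadratic $g$ vanishing at the vertex with negative slope there. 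Your route localizes the role of the constraint $a^2+b^2\ge 1$ in a single sign, $g'(x_1)<0$, and makes the geometric pairing of points on the rearranged Dirichlet cell more transparent; the paper's route avoids splitting the interval of integration and produces the symmetric closed form \eqref{eq:JaPieces}. Either way the conclusion $\partial_a J>0$ for $a\in(0,1/2)$ follows, since both of your brackets are strictly positive on intervals of positive length when $0<a<1/2$.
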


\begin{proof} Fix $(a,b) \in U$ with $a\in(0,1)$. 
From \eqref{paJ}, we write 
$$
 \partial_a \frac{1}{2}  J(a,b) \equiv - I_1 + I_2 - I_3, 
$$
where
\begin{align*}
I_1 &:= \frac{1}{b}  \int_{-x_1}^{x_2} f\left(x^2 + y_1^2(x)\right) \left( x- \frac{a-1}{\sqrt b}  \right) dx \\
I_2 & := \frac{1}{b} \int_{x_2}^{\frac {a} {\sqrt b}} f\left(x^2 + y_2^2(x)\right) \left( \frac{a}{\sqrt b} - x \right) dx \\
I_3 & := \frac{1}{b} \int_{\frac {a} {\sqrt b}}^{x_1} f\left(x^2 + y_2^2(x)\right) \left( x -\frac{a}{\sqrt b}  \right) dx. 
\end{align*}
The integrals defined in $I_1$, $I_2$ and $I_3$ are non-negative. These integrals have a nice geometric picture, which we illustrate in Figure \ref{fig:ChangeDcell}. Here, we plot the Dirichlet-Voronoi cell for $b = 1$ and two different values of $a$: $a=0.2$ and $a=0.3$. The three integrals each correspond to a piece of the Dirichlet-Voronoi cell that is being added or removed as the parameter $a$ is varied. We now transform each interval of integration to $[-1,1]$ so we can compare the magnitude of the three integrals. 

\begin{figure}[t!]
\begin{center}
\includegraphics[width=.6\textwidth]{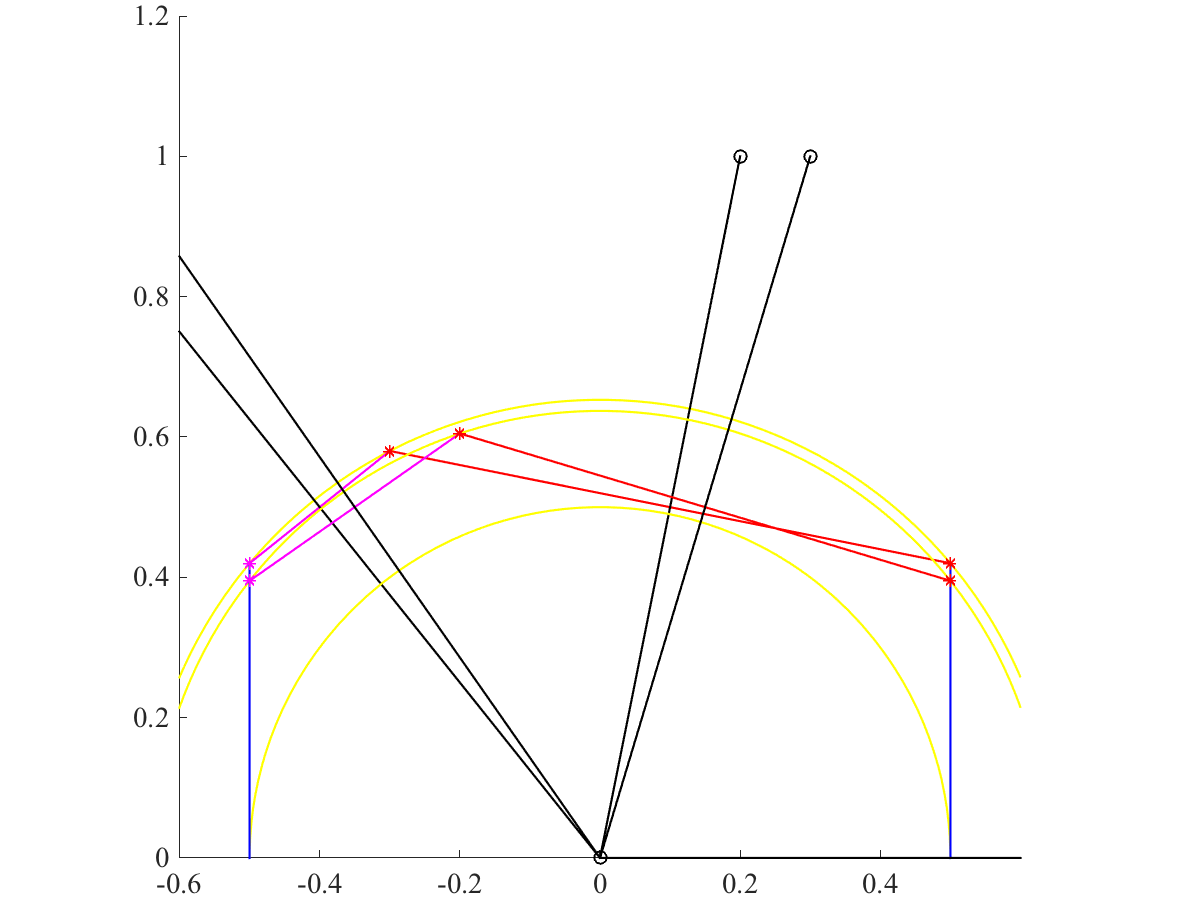}
\caption{An illustration of how the Dirichlet-Voronoi cell, $D_{a,b}$, changes between $(a,b) = (0.2,1)$ and $(a,b)=(0.3,1)$. }
\label{fig:ChangeDcell}
\end{center}
\end{figure}

\bigskip

\noindent \underline{Integral 1.}
Let $x = - \frac{1-a}{2\sqrt b} - \frac{a}{2\sqrt{b}} z$. Making this change of variables in $I_1$, we obtain 
$$
I_1 =  \frac{1}{b} \int_{-1}^{1} \frac{a (1 - a - az)}{4 b} f\left(\frac{b^2 + (1-a)^2}{4 b^3} ( b^2 + a^2 z^2)\right) \  dz
$$
Define the argument of $f$,
$$
A_1(z) := \frac{b^2 + (1-a)^2}{4 b^3} ( b^2 + a^2 z^2). 
$$
Note that $A_1(z)$ is even in $z$, so the term that is linear in $z$ is zero. Thus, we can write
$$
I_1 = \frac{1}{16b^2} \int_{-1}^{1} 4 a (1 - a) \  f\left( A_1(z) \right) \ (1-z) \ dz. 
$$
It will later be clear why we want the coefficient to contain the term $(1-z)$. 

\bigskip

\noindent \underline{Integral 2.} Let $x = \frac{4 a -1}{4 \sqrt{b}} + \frac{1}{4 \sqrt{b}}z$. Making this change of variables in $I_2$, we obtain 
$$
I_2 =  \frac{1}{16 b^2} \int_{-1}^{1}  f\left( A_2(z)  \right) \ (1-z) \  dz, 
$$
where the argument of $f$ is defined 
$$
A_2(z) :=\frac{a^2 + b^2}{16 b^3} \left(  4 b^2 + (1 - 2 a - z )^2 \right) . 
$$

\bigskip

\noindent \underline{Integral 3.} Let $x = \frac{1+2 a}{4 \sqrt{b}} - \frac{1-2 a}{4 \sqrt{b}} z $. Making this change of variables in $I_3$, we obtain 
$$
I_3 =  \frac{1}{16 b^2} \int_{-1}^{1} (1-2a)^2  \ f\left( A_3(z) \right) \ (1-z) \  dz, 
$$
where the argument of $f$  is defined 
$$
A_3(z) :=\frac{a^2 + b^2}{16 b^3} \left(  4 b^2 + \left( (1 - 2a )  z - 1 \right)^2 \right) . 
$$

\bigskip

\noindent \underline{Putting the pieces together.}
Note that the coefficients in $I_1$ and $I_3$ sum to 1, 
$$
4 a(1-a) + (1-2a)^2 = 1. 
$$
Breaking $I_2$ into two pieces, we can write 
\begin{align}
 \partial_a \frac{1}{2}  J(a,b) &=  - I_1  + I_2 - I_3 \nonumber \\
 &= \left[ 4 a(1-a)  I_2 - I_1 \right] +  \left[ (1-2a)^2 I_2 - I_3 \right] \nonumber \\
 &= \frac{1}{16 b^2} \int_{-1}^{1} 4 a(1-a) \  \left[  f \left( A_2(z) \right) - f\left( A_1(z)\right)  \right]  \ (1-z) \  dz \nonumber \\
 & \qquad  + \frac{1}{16 b^2} \int_{-1}^{1} (1-2a)^2  \ \left[  f \left( A_2(z) \right) - f\left( A_3(z) \right)  \right]  \ (1-z) \  dz 
\label{eq:JaPieces}
 \end{align}
We will show that $\partial_a J(a,b) > 0$ by showing that both of the integrands in \eqref{eq:JaPieces} are positive. 
\bigskip

\noindent \underline{Claim:} For every $z\in (-1,1)$, $A_1(z) \geq A_2(z)$. 

We compute 
\begin{align*}
\partial_z^2 \left( A_1(z) - A_2(z) \right) &= \frac{b^2 + (1-a)^2}{2 b^3} a^2 - \frac{a^2 + b^2}{8 b^3} \\
&= \frac{1}{8 b^3} \left( 4 a^2 (1-2 a) - \left(a^2+b^2\right)\left( 1 - (2 a)^2 \right) \right)\\
&= \frac{1-2a }{8 b^3} \left( 4 a^2  - \left(a^2+b^2\right)\left( 1 + 2 a \right) \right)\\
&= - \frac{1-2a }{8 b^3} \left( 2a \left(a^2+b^2\right) + b^2 - 3 a^2  \right)\\
& \leq 0
\end{align*}
as $b^2 \geq 3 a^2$.
Since 
\begin{align*}
A_1(-1) - A_2 (-1) &=  \frac{b^2 + (1-a)^2}{4 b^3} \left(a^2 + b^2\right) -   \frac{a^2 + b^2 }{4 b^3} \left(b^2 + \left( 1-a\right)^2 \right)  
= 0, \\
A_1(1) - A_2 (1) &=\frac{(1-2a) \left(a^2 + b^2 \right) }{4 b^3} \geq 0,
\end{align*}
 and $A_1(z) - A_2(z)$ is a concave function, it follows that $A_1(z) - A_2(z) \geq 0$ for all $z \in (-1,1)$. 

\bigskip

\noindent \underline{Claim:} For every $z\in (-1,1)$, $A_3(z) > A_2(z)$. 

We compute
\begin{align*}
A_3(z) &= \frac{a^2 + b^2}{16 b^3} \left[  4 b^2 + \left( (1 - 2a )  z - 1 \right)^2 \right] \\
&= \frac{a^2 + b^2}{16 b^3} \left[  4 b^2 + \left( 1 - 2a - z  \right)^2 + 4 a (1-a) (1-z^2) \right] \\
&> \frac{a^2 + b^2}{16 b^3} \left[  4 b^2 + \left( 1 - 2a - z  \right)^2  \right] \\
& = A_2(z). 
\end{align*}

\bigskip

Since $f$ is a decreasing function, these two claims show that the integrands in \eqref{eq:JaPieces} are strictly positive for every $z\in(-1,1)$. Thus,  $ \partial_a  J(a,b)>0$. 
\end{proof}

\begin{lem}   \label{lem:Jb<0}
Let $a = \frac{1}{2}$ and $b > \frac{\sqrt 3}{2}$ and $f$ a positive and decreasing function. Then $\partial_b J(a,b) < 0$. 
\end{lem}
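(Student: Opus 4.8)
The plan is to specialize \eqref{eq:Jb} to $a=\frac12$, collapse the two slanted-edge integrals into one using the symmetry of the hexagonal cell, and then reduce the whole expression to a single comparison between two integrals of the \emph{same} integrand, where the monotonicity of $f$ can be brought to bear. At $a=\frac12$ we have $x_2=0$ and $y_1(x)=y_2(-x)$, so the substitution $x\mapsto-x$ identifies the two boundary integrals in \eqref{eq:Jb} and gives
$$
\partial_b \tfrac12 J = -\frac{1}{2b^{3/2}}\int_0^{y_2(x_1)} f\!\left(\tfrac1{4b}+y^2\right)dy \;+\; 2\int_0^{x_1} f\!\left(x^2+y_2^2(x)\right)\partial_b y_2(x)\,dx \;=:\; T_0+S .
$$
Here $T_0<0$ since $f>0$, and after the reductions below $S$ will be manifestly positive; the lemma reduces to establishing $|T_0|>S$.

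To compare the two, I complete the square in the argument of $f$ in $S$: a short computation gives $x^2+y_2^2(x)=\alpha\big(x-\tfrac{x_1}{2}\big)^2+\rho^2$ with $\alpha=\frac{4b^2+1}{4b^2}$ and $\rho^2=\frac{4b^2+1}{16b}$. Centering at $x_1/2$ kills the odd part of $\partial_b y_2$ against the now-even integrand, and a subsequent rescaling $v=\sqrt\alpha\,(x-\tfrac{x_1}{2})$ turns $S$ and $T_0$ into
$$
S=\frac{4b^2-1}{2b^{3/2}\sqrt{4b^2+1}}\int_0^{Q} f\!\left(v^2+\rho^2\right)dv, \qquad |T_0|=\frac{1}{2b^{3/2}}\int_0^{P} f\!\left(\tfrac1{4b}+y^2\right)dy,
$$
with $P=\frac{4b^2-1}{8b^{3/2}}$ and $Q=\frac{\sqrt{4b^2+1}}{8b^{3/2}}$. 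Because $b>\frac{\sqrt3}{2}$ is exactly the condition $\rho^2>\frac1{4b}$, I can finally align the two integrands by the substitution $w=\sqrt{v^2+w_0^2}$ with $w_0^2=\rho^2-\frac1{4b}=\frac{4b^2-3}{16b}\ge 0$, which sends $v^2+\rho^2\mapsto\frac1{4b}+w^2$ and, using the identity $P^2-w_0^2=Q^2$, maps $[0,Q]$ onto $[w_0,P]$.

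After cancelling the common prefactor $\frac1{2b^{3/2}}$, the desired inequality $|T_0|>S$ becomes
$$
\int_0^{P} f\!\left(\tfrac1{4b}+w^2\right)g(w)\,dw>0, \qquad g(w):=1-\frac{4b^2-1}{\sqrt{4b^2+1}}\,\frac{w}{\sqrt{w^2-w_0^2}}\,\mathbf{1}_{\{w>w_0\}} .
$$
Two elementary facts about $g$ then close the argument: a direct evaluation gives $\int_0^P g=P-\frac{4b^2-1}{\sqrt{4b^2+1}}\sqrt{P^2-w_0^2}=0$, and $g$ has a single sign change, being $+1$ on $[0,w_0)$ and strictly negative on $(w_0,P]$ (the subtracted weight is decreasing in $w$, hence least at $w=P$, where it equals $\frac{(4b^2-1)^2}{4b^2+1}>1$). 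Writing $m=f(\frac1{4b}+w_0^2)$ and using that $w\mapsto f(\frac1{4b}+w^2)$ is decreasing, $\big(f(\frac1{4b}+w^2)-m\big)g(w)\ge0$ throughout $[0,P]$ and is strictly positive on $[0,w_0)$; integrating and invoking $\int_0^P g=0$ yields the strict inequality, hence $\partial_b J<0$.

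The main obstacle is the alignment in the middle step: $T_0$ is a single negative contribution, whereas $S$ aggregates the outward motion of both slanted edges and is genuinely positive, so naive bounds leave a sign-indefinite remainder. The decisive point is that the two changes of variables render the integrands identical and force $\int_0^P g=0$ with $g$ changing sign exactly once; the conclusion then follows purely from the monotonicity of $f$, extending the exact cancellation already observed at the equilateral point $b=\frac{\sqrt3}{2}$ to a strict inequality for $b>\frac{\sqrt3}{2}$.
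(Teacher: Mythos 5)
Your proof is correct, and while its first half coincides with the paper's (specialize to $a=\tfrac12$, use $y_1(x)=y_2(-x)$ to fold the two edge integrals into one, complete the square about $x=x_1/2$ so that the odd part of $\partial_b y_2$ drops out), the decisive comparison is carried out by a genuinely different mechanism. The paper chooses the linear rescaling $z=\frac{4b^2-1}{2b}\left(x-\frac{x_1}{2}\right)$, which is rigged so that the upper limit becomes $z_1=y_2(x_1)$ and the prefactor becomes exactly $\frac{1}{2b^{3/2}}$; the two integrals then live on the same interval with the same weight, and the lemma reduces to the pointwise inequality $\frac{1}{4b}+z^2\le \frac{1+4b^2}{16b}+\frac{1+4b^2}{(4b^2-1)^2}z^2$ on $(0,z_1)$, which holds precisely because $b\ge\frac{\sqrt3}{2}$ (with equality only at $z=z_1$), after which monotonicity of $f$ finishes in one line. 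You instead make the \emph{arguments} of $f$ identical via the nonlinear substitution $w=\sqrt{v^2+w_0^2}$, pushing all the discrepancy into the weight $g$, and then close with a mean-zero, single-sign-change (Chebyshev/crossing) argument: $\int_0^P g=0$, $g$ changes sign once at $w_0$, and $f(\tfrac{1}{4b}+w^2)$ is decreasing, so $\int_0^P f\,g>0$. All the identities you need check out ($\rho^2=\frac{4b^2+1}{16b}$, $P=\frac{4b^2-1}{8b^{3/2}}=y_2(x_1)$, $Q^2+w_0^2=P^2$, $\frac{4b^2-1}{\sqrt{4b^2+1}}Q=P$, and $\frac{(4b^2-1)^2}{4b^2+1}>1$ for $b>\frac{\sqrt3}{2}$), and the weight $w/\sqrt{w^2-w_0^2}$ is integrable at $w_0$, so the argument is complete. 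The paper's route is shorter and entirely elementary; yours is structurally more robust, since it would still work in situations where the pointwise domination of the $f$-arguments fails but the integrated (mean-zero-weight) comparison survives, and it makes transparent why $b=\frac{\sqrt3}{2}$ is the critical case: there $w_0=0$ and $g\equiv 0$, recovering the exact cancellation at the equilateral torus.
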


\begin{proof}
In \eqref{eq:Jb}, we computed the partial derivative of $J(a,b)$ with respect to the parameter $b$. For $a = \frac{1}{2}$, we have $x_2 = 0$, 
$$
y_2(x) = \frac{1}{2\sqrt{b} } \left( \frac{1}{4b} + b \right) - \frac{1}{2b} x = y_1(-x), 
$$
and  
$$
\partial_b y_2(x) = \frac{1}{4\sqrt{b}} \left( 1 - \frac{3}{4b^2} \right) + \frac{1}{2b^2}x = (\partial_b y_1)(-x). 
$$
We obtain
\begin{align}
 \partial_b \frac{1}{2}  J(a,b)  
=&  -\frac{1}{2 b^{\frac 3 2}} \int_{0}^{y_2(x_1)} f\left(\frac{1}{4b} + y^2\right)  dy  \nonumber \\
& +\int_{-x_1}^{0} f\left( x^2 + y_2^2(-x) \right) \left( \partial_b y_2(-x) \right) dx 
 + \int_{0}^{x_1} f\left(x^2 + y_2^2(x)\right) \left( \partial_b y_2(x) \right) dx \nonumber \\
=&  -\frac{1}{2 b^{\frac 3 2}} \int_{0}^{y_2(x_1)} f\left(\frac{1}{4b} + y^2\right)  dy   + 2 \int_{0}^{x_1} f\left(x^2 + y_2^2(x)\right) \left( \partial_b y_2(x) \right) dx \label{eqref:K2}
\end{align}
We denote the second integral in \eqref{eqref:K2}, by 
$$
K_2 : = 2 \int_{0}^{x_1} f\left(x^2 + y_2^2(x)\right) \left( \partial_b y_2(x) \right) dx. 
$$
Making the change of variables, 
$$
z = \frac{4 b^2 -1}{2b} \left( x - \frac{x_1}{2}\right), 
$$
we compute 
$$
x^2 + y_2^2(x) =   \frac{1+4b^2}{16 b} + \frac{1+4b^2}{ (4b^2 -1 )^2} z^2 
\qquad \quad 
\text{and}
\qquad \quad 
\partial_b y_2(x) = \frac{4 b^2 -1}{16 b^{\frac 5 2}} + \frac{z}{b(4 b^2-1)}, 
$$
to obtain
\begin{align*}
K_2 &= 2 \int_{-z_1}^{z_1} f\left( \frac{1+4b^2}{16 b} + \frac{1+4b^2}{ (4b^2 -1 )^2} z^2 \right) \left( \frac{4 b^2 -1}{16 b^{\frac 5 2}} + \frac{z}{b(4 b^2 - 1)} \right) \frac{2b}{4b^2 -1}dz
\end{align*}
where $z_1 := \frac{4 b^2 -1}{ 8 b^{\frac 3 2}}$. The term involving the $z$ in the square brackets is an odd function and the term involving the constant is even so we have 
$$
K_2 = \frac{1}{2 b^{\frac 3 2}}   \int_{0}^{z_1} f\left( \frac{1+4b^2}{16 b} + \frac{1+4b^2}{ (4b^2 -1 )^2} z^2 \right) dz 
$$
Finally noting that  $z_1 = y_2(x_1)$, we obtain from \eqref{eqref:K2},
\begin{equation*}
 \partial_b \frac{1}{2}  J(a,b)  = \frac{1}{2 b^{\frac 3 2}}   \int_{0}^{z_1} f\left( \frac{1+4b^2}{16 b} + \frac{1+4b^2}{ (4b^2-1 )^2} z^2 \right) - f\left(\frac{1}{4b} + z^2\right)   dz .
\end{equation*}
But for $b\geq \frac{\sqrt{3}}{2}$, a simple calculation shows that 
$$
\frac{1}{4b} + z^2 \leq  \frac{1+4b^2}{16 b} + \frac{1+4b^2}{ (4b^2-1 )^2} z^2 ,  \qquad  z \in (0,z_1). 
$$
Thus, if $f$ is a positive and decreasing function, this implies that $ \partial_b   J(a,b) < 0$ for $a= \frac{1}{2}$ and $b > \frac{\sqrt{3}}{2}$. \end{proof}

\bigskip

\begin{proof}[Proof of Theorem \ref{thm:main}]  The preceding results are now put together as follows. We start with a flat torus, $T_{a,b}$ with $(a,b)\in U$. By Lemma \ref{lem:Ja>0}, by  continually rearranging the torus by increasing $a$, the objective function is strictly increasing. We stop when $a= \frac{1}{2}$. We then continually rearrange the torus by decreasing $b$ and by Lemma \ref{lem:Jb<0}, the objective function is again strictly increasing. We stop when $b = \frac{\sqrt{3}}{2}$. Thus the maximum is attained by the equilateral torus. This completes the proof. 
\end{proof}

\bigskip

\begin{rem}
Interestingly, the square torus is a saddle point of $J(a,b)$. This is a similar phenomenon to that observed by spectral properties of the Laplacian on a Bravais Lattice, see \cite{OM1}.  
\end{rem}

\appendix
\section{Proof of Proposition \ref{prop:LatticeParam}} \label{sec:LatPara}
%Let us recall some notation.  We let $B  \in \mathbb R^{2\times 2}$ have linearly independent columns. The lattice generated by the basis $B$ is the set  of integer linear combinations of the columns of $B$, 
%$L(B) = B(\mathbb Z^2)$.
%Let $B$ and $C$ be two lattice bases. We recall that $ L(B)$ and $ L(C)$ are isometric if there is a unimodular\footnote{A  matrix $A \in \mathbb Z^{n\times n}$ is \emph{unimodular} if $\mathrm{det} A  = \pm 1$.}  
%matrix $U$ such that $B = CU$. 
%The following proposition parameterizes the space of two-dimensional, unit-volume lattices modulo isometry.   We now proceed with a proof of Proposition \ref{prop:LatticeParam}
%\begin{prop} \label{prop:LatticeParam}
%Every two-dimensional lattice with volume one is isometric to a lattice, $L_{a,b}$, parameterized by the basis 
%$$ B_{a,b} = \begin{pmatrix} \frac{1}{\sqrt b} & \frac{a}{\sqrt b} \\ 0 & \sqrt b \end{pmatrix}, $$ 
%where the parameters $a$ and $b$ are constrained to the set
%$$
%U := \left\{ (a,b) \in \mathbb R^2 \colon b>0, \ a \in [ 0,1/2  ], \ \text{and} \ a^2 + b^2 \geq 1 \right\}.
%$$ 
%\end{prop}

 Consider an arbitrary lattice with unit volume. 
We first choose the basis vectors so that the angle between them is acute.  After a suitable rotation and reflection, we can let the shorter basis vector (with length $\frac{1}{\sqrt b}$) be parallel with the $x$ axis and the longer basis  vector (with length $ \sqrt{ \frac{a^2}{b} + b} = \sqrt{\frac{1}{b} \left( a^2 + b^2 \right)} \geq \sqrt{ \frac{1}{b}}$) lie in the first quadrant. 
Multiplying on the right by a unimodular matrix, $ \begin{pmatrix} 1 & 1 \\ 0 & 1 \end{pmatrix}$, we compute 
$$
\begin{pmatrix} \frac{1}{\sqrt b} & \frac{a}{\sqrt b} \\ 0 & \sqrt b \end{pmatrix} 
 \begin{pmatrix} 1 & 1 \\ 0 & 1 \end{pmatrix} = 
\begin{pmatrix} \frac{1}{\sqrt b} & \frac{a+1}{\sqrt b} \\ 0 & \sqrt b \end{pmatrix} . 
$$
Since this is equivalent to taking $a\mapsto a+1$, it follows that we can identify the lattices associated to the points $(a,b)$ and $(a+1,b)$. Thus, we can  restrict the parameter $a$ to the interval 
$\left[ 0, 1/2  \right]$ by symmetry.  For a complete picture of this restriction and how the symmetry naturally arises, see \cite[Proposition $3.2$ and Figure $3$]{LapBel2013}.  For more on flat tori, see also \cite{milnor1964eigenvalues,giraud2010hearing,laugesen2011sums}. $\square$

\section{A Translation of the Proof of The Fejes T\'oth Moment Lemma}
\label{app:momlem}

Here we translate and discuss  L\'aszl\'o Fejes T\'oth's  proof of his  Moment Theorem, which was published in German \cite{toth1953} and Russian \cite{toth1953russianbook}. Other references for the Moment Theorem include \cite{toth1973} where a sum of moments theorem is proved, \cite{gruber1999} where the sum of moments theorem is proved using the Moment Lemma below, \cite{BoroczkyCsikos2010} where a new version of the Moment Theorem appears with quadratic forms instead of moments, and \cite{Gruber2007Book} which contains a very nice summary of Moment type results.  See also \cite{bourne2014optimality} for an application of the Moment Theorem to block copolymer structures.  This is by no means a complete list of relevant references for the proofs and development of the Moment Theorem, but gives some idea of its long and important history.

We present also L\'aszl\'o Fejes T\'oth's  proof of his  Moment Lemma, as given in \cite{Toth1963}. This Lemma is also stated, but not proven in \cite{toth1973,gruber1999,Gruber2007Book}.  Some of the arguments and notation have been slightly modified from the original proof for clarification.

\begin{thm}{The Moment Theorem, \cite[p.80]{toth1953}}  \label{prop:MomThm}
Assume $f\colon [0,\infty) \to \mathbb R$ is a non-increasing function.  
Let $\mathbf{p}_1,\ldots \mathbf{p}_n$ be $n$ points in the plane and $C\subset \mathbb R^2$ a convex hexagon. Let  $d(\mathbf{p}) = \min_i \|  \mathbf{p} -  \mathbf{p}_i \|$  be the distance from the point $\mathbf{p}$ to the set $\{\mathbf{p}_i\}_{i=1}^n$. 
It holds that 
\begin{equation} \label{eq:RegHex}
\int_C f\left(d (\mathbf{p}) \right) \ d\mathbf{p} 
\ \leq \ 
n \cdot \int_\sigma  f\left( \| \mathbf{p} \| \right) \ d\mathbf{p} 
\end{equation}
where $\sigma $ is a regular hexagon with volume $  \frac { |C| } {n} $ and center $\mathbf o$.
\end{thm}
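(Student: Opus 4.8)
The plan is to reduce the theorem to a single-cell statement---the Moment Lemma---and then recombine the cell estimates through a convexity argument. First I would partition the convex hexagon $C$ according to the points: let $C_i \subset C$ be the set of points of $C$ at least as close to $\mathbf{p}_i$ as to any other $\mathbf{p}_j$, i.e. the restriction to $C$ of the Dirichlet--Voronoi cell of $\mathbf{p}_i$ (points whose cell is empty may be discarded). Each $C_i$ is an intersection of half-planes with the convex set $C$, hence a convex polygon, and $\mathbf{p}_i$ lies in its interior; on $C_i$ one has $d(\mathbf{p}) = \|\mathbf{p} - \mathbf{p}_i\|$. Since the cells tile $C$ up to measure zero,
\[ \int_C f(d(\mathbf{p}))\,d\mathbf{p} = \sum_{i} \int_{C_i} f(\|\mathbf{p} - \mathbf{p}_i\|)\,d\mathbf{p}, \]
so the left-hand side becomes a sum of single-cell moments with the Euclidean distance measured to each cell's own site.

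Next I would apply the Moment Lemma to each cell. Writing $m_i$ for the number of sides of $C_i$ and $a_i = |C_i|$, the Lemma bounds the cell moment by that of a centered regular polygon of the same area, $\int_{C_i} f(\|\mathbf{p}-\mathbf{p}_i\|)\,d\mathbf{p} \le \Phi(a_i, m_i)$, where $\Phi(a,m) := \int_{R} f(\|\mathbf{x}\|)\,d\mathbf{x}$ and $R$ is the regular $m$-gon of area $a$ centered at $\mathbf{o}$. The theorem is then reduced to the purely quantitative inequality $\sum_i \Phi(a_i, m_i) \le n\,\Phi(|C|/n, 6) = n\int_\sigma f(\|\mathbf{x}\|)\,d\mathbf{x}$. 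To close this I would invoke two facts. The first is combinatorial: the cells subdivide the hexagon $C$, and an Euler-characteristic count---using that interior vertices have degree at least three and that $\partial C$ has six sides---is meant to give the average side bound $\sum_i m_i \le 6n$, with boundary cells needing care. The second is analytic: $\Phi(a,m)$, extended to real $m \ge 3$, should be concave and nondecreasing in $m$ and concave in $a$, since the regular $m$-gon grows rounder, hence more concentrated about its center, as $m$ increases with area fixed. Granting joint concavity, a Jensen/Lagrange argument over the constraints $\sum_i a_i = |C|$ and $\sum_i m_i \le 6n$ pushes the maximizing configuration to the symmetric point $a_i = |C|/n$, $m_i = 6$, which is exactly $n$ copies of $\sigma$.

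The main obstacle is this last step: establishing the required concavity and monotonicity of the regular-polygon moment $\Phi(a,m)$ jointly in the area and in the continuously extended number of sides, and making the side-count $\sum_i m_i \le 6n$ genuinely sharp at the hexagonal configuration in spite of boundary cells (where the naive Euler estimate only yields $\sum_i m_i \le 6n + (\text{boundary contributions})$). The route I would actually pursue, which sidesteps the boundary bookkeeping, is to triangulate each cell from its site $\mathbf{p}_i$ into $m_i$ triangles with apex angles $\theta_{ij}$ summing to $2\pi$ per cell. Then $\sum_{i,j}\theta_{ij} = 2\pi n$ and $\sum_{i,j} s_{ij} = |C|$ are boundary-free linear constraints (here $s_{ij}$ is the triangle area), and the problem collapses to a single concave ``triangle moment'' inequality $\int_T f(\|\mathbf{x}-\mathbf{p}_i\|)\,d\mathbf{x} \le \Psi(\theta_{ij}, s_{ij})$, whose optimization subject to those two constraints is extremized by triangles of apex angle $\pi/3$---precisely the six triangles composing the regular hexagon $\sigma$. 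Proving the concavity of this triangle moment $\Psi(\theta,s)$, and identifying its extremal (isosceles) triangle, is the technical heart of the argument; everything else is the reduction above and a convexity/Jensen finish.
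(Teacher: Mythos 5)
Your plan is a legitimate alternative architecture---it is essentially Gruber's route to the sum-of-moments theorem via the Moment Lemma, which the paper cites as a distinct proof strategy---but as written it has a genuine gap at precisely the point you flag. The entire analytic content of that route is the joint concavity and monotonicity of the regular-polygon moment $\Phi(a,m)$ in the area and the continuously extended number of sides (equivalently, concavity of the triangle moment $\Psi(\theta,s)$), and you assert this only on the heuristic that rounder polygons are more concentrated. This is not a routine verification: it carries the same weight as the Moment Lemma itself, and without it the concluding Jensen step does not exist. (Monotonicity of $\Phi(a,\cdot)$ over integer $m$ does come for free from Theorem \ref{thm:MomLemma} by viewing a regular $m$-gon as an $(m+1)$-gon with a false vertex, but the concavity in two variables does not.) By contrast, the paper's proof---Fejes T\'oth's original---never invokes the Moment Lemma: it circumscribes about each site a copy of the circle $K$ that circumscribes $\sigma$, rearranges each overhang region $R^i_\ell$ into a circular segment (Lemma \ref{lem:Lemma2}), and applies Jensen to the \emph{one-variable} segment moment $\omega(s)$, whose convexity is a two-line argument (Lemma \ref{lem:Lemma1}). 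The price of your cleaner-looking reduction is that the convexity input becomes much harder.

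There is a second, more concrete error: the claim that triangulating each cell from its site ``sidesteps the boundary bookkeeping.'' It does not. After Jensen, the constraints $\sum_{i,j}\theta_{ij}=2\pi n$ and $\sum_{i,j}s_{ij}=|C|$ land you at $N\,\Psi\!\left(2\pi n/N,\,|C|/N\right)$, where $N=\sum_i m_i$ is the total number of triangles; for $n=1$ this is exactly the moment of the regular $N$-gon of area $|C|$, which \emph{increases} with $N$. To conclude with the hexagon value you must still know $N\le 6n$, and that bound is exactly the Euler-characteristic count of the paper's Lemma \ref{lem:Lemma3}, where the hypothesis that $C$ is a hexagon enters through the exterior face of the planar graph having at least six sides. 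So the combinatorial count is indispensable in either formulation (though your worry that it only yields $6n$ plus boundary corrections is unfounded---Lemma \ref{lem:Lemma3} shows the clean bound $N\le 6n$ holds). In summary: the skeleton is a known valid alternative, but both load-bearing ingredients---the concavity of $\Phi$ (or $\Psi$) and the count $N\le 6n$---are left unproven or incorrectly dismissed.
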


T\'oth's proof of Proposition \ref{prop:MomThm} relies on the Voronoi partition of the convex set $C\subset \mathbb R^2$ by the points $\{ \mathbf{p}_i \}_{i=1}^n$ and a careful analysis of the resulting partition components. Namely a rearrangement argument is used to compare each partition component to a ball which circumscribes the regular hexagon $\sigma$. The proof uses the following three lemmas, which we first prove. We note that Lemmas \ref{lem:Lemma1} and \ref{lem:Lemma2} are also proven in \cite{Imre1964} and stated (but not proven) in \cite{Toth1963}.

\begin{lem}{\cite[Lemma 1, p.82]{toth1953}} \label{lem:Lemma1}
Assume $f\colon [0,\infty) \to \mathbb R$ is a non-increasing function. 
Let $K\subset \mathbb R^2$ be a ball  centered at $\mathbf o$. Let $S= S(s)$ be a circular segment parameterized by its volume, $s = |S|$. Then the function 
$$
\omega(s) := \int_{S(s)} f\left( \| \mathbf{x} \| \right) \ d\mathbf{x}, \qquad 0\leq s < |K|/2
$$
is convex.
\end{lem}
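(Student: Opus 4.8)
The plan is to parametrize the segment by the height $h$ of its bounding chord rather than directly by its area $s$, reduce the convexity of $\omega$ to a monotonicity statement for $\omega'$, and then extract that monotonicity from a single clean change of variables.

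First I would fix the radius $R$ of $K$ and, after a rotation, take the chord bounding $S$ to be horizontal at height $y = h \in (0,R]$, so that $S(s) = \{(x,y)\in K : y \ge h\}$ is the (minor) upper segment. Writing $\ell(h) := \sqrt{R^2 - h^2}$ for the half-length of the chord, the area is $s(h) = 2\int_h^R \sqrt{R^2 - y^2}\,dy$, so that $s'(h) = -2\ell(h)$. In particular $h \mapsto s$ is a smooth decreasing bijection from $(0,R]$ onto $[0,|K|/2)$, and its inverse $h(s)$ is decreasing. Differentiating $\tilde\omega(h) := \int_h^R \int_{-\ell(y)}^{\ell(y)} f\!\left(\sqrt{x^2+y^2}\right)dx\,dy$ in $h$ and dividing by $s'(h)$ then gives, by the chain rule,
$$
\omega'(s) = \frac{1}{\ell(h)} \int_0^{\ell(h)} f\!\left(\sqrt{x^2 + h^2}\right) dx =: g(h),
$$
the average value of $f(\|\cdot\|)$ along the chord at height $h$. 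Since $h(s)$ is decreasing, proving that $\omega$ is convex reduces to proving that $g$ is non-increasing in $h$ (for then $\omega'(s)=g(h(s))$ is non-decreasing in $s$, and a function with non-decreasing derivative is convex).

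The key step is the substitution $x = \ell(h)\,t$, $t\in[0,1]$, which rescales every chord to unit length and yields
$$
g(h) = \int_0^1 f\!\left(\sqrt{R^2 t^2 + h^2 (1 - t^2)}\right) dt.
$$
For each fixed $t\in[0,1]$ the coefficient $1-t^2\ge 0$, so the argument $R^2 t^2 + h^2(1-t^2)$ is non-decreasing in $h$; since $f$ is non-increasing, each integrand is non-increasing in $h$, and hence so is $g$. Combined with the decrease of $h(s)$, this shows $\omega'(s)=g(h(s))$ is non-decreasing in $s$, which is the desired convexity.

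The main obstacle is arranging the bookkeeping so that this averaging interpretation of $\omega'$ emerges cleanly — in particular, obtaining the $1/\ell(h)$ normalization and then recognizing that rescaling the chord to unit length is exactly what decouples the $t$- and $h$-dependence and exposes the monotonicity. Regularity is a minor issue: the restriction $s < |K|/2$ keeps $h$ bounded away from the origin, so $f\!\left(\sqrt{x^2+h^2}\right) \le f(h) < \infty$ on each chord and all the integrals and the differentiation under the integral sign are justified; notably, no differentiability of $f$ is needed, since the argument uses only that $\omega'$ is non-decreasing.
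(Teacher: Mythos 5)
Your proof is correct and takes essentially the same approach as the paper's: both reduce convexity to monotonicity of the increments of $\omega$, identify that increment (in the limit) with an average of $f(\|\cdot\|)$ over the bounding chord, and prove the monotonicity by pairing points on chords at different heights and using that $f$ is non-increasing. Your explicit rescaling $x=\ell(h)\,t$ is simply the rigorous version of the ``corresponding points $\mathbf{x}_1\in\Delta S_1$, $\mathbf{x}_2\in\Delta S_2$'' comparison that the paper delegates to Figure \ref{fig:Fig78}, and your regularity remarks are sound.
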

\begin{proof}
Consider the circular segments with volumes $s_1 < s_1^*< s_2 < s_2^*$ with $s_i^* - s_i = \epsilon$ for $i=1,2$. Let $\Delta S_i$ for $i=1,2$ denote the (nearly trapezoidal) regions $S(s_i^*) \setminus S(s_i)$. 
From Figure \ref{fig:Fig78} it is clear that corresponding points $\mathbf x_1\in \Delta S_1$ and $\mathbf x_2\in \Delta S_2$ satisfy  $\|  \mathbf{x}_1\| > \| \mathbf{x}_2 \|$. Noting that $|\Delta S_2| = |\Delta S_1|$ and using the monotonicity of $f$, we  have that 
$$
\omega(s_1^*) - \omega(s_1) 
\ = \ \int_{\Delta S_1} f\left( \| \mathbf{x} \| \right) \ d\mathbf{x}
\ \leq \  \int_{\Delta S_2} f\left( \|  \mathbf{x} \| \right) \ d\mathbf{x}
\ = \  \omega(s_2^*) - \omega(s_2) .
$$
Dividing both sides by $\epsilon$ and taking the limit $s_i^* \to s_i$ for $i=1,2$, {\it i.e.}, $\epsilon \to 0$, 
we have that $\omega'(s_1) \leq \omega'(s_2)$ which shows that $\omega$ is a convex function. 
\end{proof}

\begin{figure}[h!]
\begin{center}
\begin{tikzpicture}[thick,scale=3.0]
    \draw [color=black] circle(1cm);
    \draw (0,0) node[circle,fill,inner sep=1pt,label=left:$\mathbf{o}$] (origin){};
      
   \draw (origin) (60:1cm) coordinate (s1){};
   \draw (origin) (-60:1cm) coordinate  (s2){};   
   \draw (origin) (0:.5cm) coordinate (s3){};
   \draw (origin) (0:1cm) coordinate (s4){};
   %\draw (origin) (220:1cm) coordinate (s5){};
   
   \draw [color=blue,name path=line3] (s1) -- (s2);
   \draw [fill=blue,opacity=.1] (s1) -- (s2) arc [radius=1, start angle=-60, end angle=60];  
   \draw [color=blue] (origin) (-8:.8cm) coordinate [label=$S(s)$] (S); 
   %\draw [<->] (s3) -- (s4) node [midway, above] {s};
   %\draw [<->] (origin) -- (s5) node [midway,above] {r};   
   \draw (origin) (140:.8cm) coordinate [label=$K$] (K); 

\end{tikzpicture}
\begin{tikzpicture}[thick,scale=3.0]
    \draw [color=black] circle(1cm);
    \draw (0,0) node[circle,fill,inner sep=1pt,label=above left:$\mathbf{o}$] (origin){};
    
    \draw (origin) (140:.8cm) coordinate [label=$K$] (K); 
    
    \draw (origin) (80:1cm) node[circle,fill,inner sep=1pt] (s1a){};
    \draw (origin) (-80:1cm) node[circle,fill,inner sep=1pt] (s1b){};
    \draw (origin) (77:1cm) node[circle,fill,inner sep=1pt] (s1pa){};
    \draw (origin) (-77:1cm) node[circle,fill,inner sep=1pt] (s1pb){};
    \draw (s1a) -- (s1pa) -- (s1pb) -- (s1b) -- (s1a);
    \draw (origin) (90:.7cm) coordinate [label=$\Delta S_2$] (ds2); 
    \draw (origin) (-59:.4cm) node[circle,fill,inner sep=1pt,label=left:$\mathbf{x}_2$] (x2){};     
    \draw (origin) -- (x2);
      
    \draw (origin) (50:1cm) node[circle,fill,inner sep=1pt] (s2a){};
    \draw (origin) (-50:1cm) node[circle,fill,inner sep=1pt] (s2b){};
    \draw (origin) (46:1cm) node[circle,fill,inner sep=1pt] (s2pa){};
    \draw (origin) (-46:1cm) node[circle,fill,inner sep=1pt] (s2pb){};
    \draw (s2a) -- (s2pa) -- (s2pb) -- (s2b) -- (s2a);
    \draw (origin) (10:.85cm) coordinate [label=$\Delta S_1$] (ds1); 
    \draw (origin) (-21:.72cm) node[circle,fill,inner sep=1pt,label=right:$\mathbf{x}_1$] (x1){};     
    \draw (origin) -- (x1);

\end{tikzpicture}

\caption{Illustrations for the statement and proof of Lemma \ref{lem:Lemma1}. See \cite[Fig.78]{toth1953}. }
\label{fig:Fig78}
\end{center}
\end{figure}

\begin{figure}[h!]
\begin{center}
\begin{tikzpicture}[thick,scale=3.0]
    \draw [color=black] circle(1cm);
    \draw (0,0) node[circle,fill,inner sep=1pt,label=left:$\mathbf{o}$] (origin){};
    
    \draw (origin) (45:1cm) node[circle,fill,inner sep=1pt,label=above right:$\mathbf{a}'$] (Ap){};
    \draw (origin) (45:.8cm) node[circle,fill,inner sep=1pt,label= right:$\mathbf{a}$] (A){};
    \draw (origin) -- (A);
    %\draw [name path=line1] (origin) -- (A);
    
    \draw (origin) (-45:1cm) node[circle,fill,inner sep=1pt,label=below right:$\mathbf{b}'$] (Bp){};
    \draw (origin) (-45:.4cm) node[circle,fill,inner sep=1pt,label=below left:$\mathbf{b}$] (B){};
    \draw (origin) -- (B);
    
   \draw [color=red] (Ap) -- (A);
   \draw [name path=line1,color=red] (A) -- (B);
   %\draw [color=red] (A) -- (B); 
   \draw [name path=line2,color=red] (B) -- (Bp);    
   \draw [color=red] (Bp) arc [radius=1, start angle=-45, end angle=45] -- (Ap);
   \draw [color=red] (origin) (-0:.8cm) coordinate [label=$R$] (R); 
   
   \draw (origin) (60:1cm) node[circle,fill,inner sep=1pt,] (s1){};
   \draw (origin) (-60:1cm) node[circle,fill,inner sep=1pt,] (s2){};   
   \draw [color=blue,name path=line3] (s1) -- (s2);
   \draw [fill=blue,opacity=.1] (s1) -- (s2) arc [radius=1, start angle=-60, end angle=60];  
   \draw [color=blue] (origin) (-54:1cm) coordinate [label=$S$] (S); 
   
   \path [name intersections={of=line1 and line3,by=C}];
   \node [fill=black,inner sep=1pt,label=right:$\mathbf{c}$] at (C) {};

   \path [name intersections={of=line2 and line3,by=D}];
   \node [fill=black,inner sep=1pt,label=right:$\mathbf{d}$] at (D) {};
\end{tikzpicture}

\caption{Illustrations for the statement and proof of Lemma \ref{lem:Lemma2}. See \cite[Fig.79]{toth1953}. }
\label{fig:Fig79}
\end{center}
\end{figure}

\begin{lem} \label{lem:Lemma2}
Assume $f\colon [0,\infty) \to \mathbb R$ is a non-increasing function. 
Let $\mathbf a$ and $\mathbf b$ be two points contained in the ball $K$ %of radius $r$ 
centered at 
$\mathbf o$. 
Let $\mathbf{a}'$ and $\mathbf{b}'$ be the points at the respective intersections between the 
rays $\overrightarrow{ \mathbf{o} \mathbf{a}}$ and $\overrightarrow{ \mathbf{o} \mathbf{b}}$ with $\partial K$. 
Let $R\subset K$ denote the region enclosed by the path defined by the segments $\overline{\mathbf{a}' \mathbf{a}}$, $\overline{\mathbf{a} \mathbf{b} }$, $\overline{ \mathbf{b} \mathbf{b}' }$, and the circular arc $\angle \mathbf{b}' \mathbf{a}' $. 
Let $S = S(s)$ denote the circular segment with volume $ s = |R| $. 
It holds that 
$$
\omega (s) := 
\int_S f\left( \|  \mathbf{x} \| \right)  \ d\mathbf{x} 
\  \leq \ 
\int_R f\left( \|   \mathbf{x} \| \right)  \ d\mathbf{x} .
$$
\end{lem}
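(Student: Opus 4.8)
The plan is to reduce the two-dimensional comparison to the statement that the mass of $R$ is more tightly concentrated about $\mathbf{o}$ than the mass of $S$, and then to exploit that $f$ is non-increasing. Concretely, I would try to exhibit an area-preserving bijection $\Phi\colon R \to S$ with $\|\Phi(\mathbf{x})\| \ge \|\mathbf{x}\|$ for every $\mathbf{x}\in R$. Granting such a map, the change of variables together with the monotonicity of $f$ gives
\[
\int_S f\left(\|\mathbf{x}\|\right)\,d\mathbf{x} = \int_R f\left(\|\Phi(\mathbf{x})\|\right)\,d\mathbf{x} \le \int_R f\left(\|\mathbf{x}\|\right)\,d\mathbf{x},
\]
which is exactly $\omega(s)\le \int_R f$. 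Since $\omega(s)$ depends only on $\|\mathbf{x}\|$, the segment $S$ may be rotated freely, so I am free to choose the orientation of $S$ to suit the construction of $\Phi$.

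To build $\Phi$, let $\ell$ denote the chord line bounding $S$, so that $S$ is the part of $K$ lying on the far side of $\ell$ from $\mathbf{o}$, at distance $h>0$ from $\mathbf{o}$. Because $S$ is precisely the far side of $\ell$ inside $K$, the overlap $R\cap S$ is exactly the portion of $R$ beyond $\ell$, while $R\setminus S$ is the near-side remainder. I would set $\Phi$ to be the identity on $R\cap S$ and the reflection across $\ell$ on $R\setminus S$. Reflection across $\ell$ is area-preserving, and because $\mathbf{o}$ lies on the near side, a direct computation shows that for $\mathbf{x}$ on the near side its reflection $\mathbf{x}'$ satisfies $\|\mathbf{x}'\|^2 = \|\mathbf{x}\|^2 + 4h\,(h - \mathbf{x}\cdot\hat{\mathbf{n}}) \ge \|\mathbf{x}\|^2$, where $\hat{\mathbf{n}}$ is the unit normal of $\ell$; so the map moves points weakly outward. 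The real content is that $\Phi$ is a bijection onto $S$, i.e. that the reflected copy of $R\setminus S$ coincides with $S\setminus R$. Since $|R| = |S|$ forces $|R\setminus S| = |S\setminus R|$, it suffices to prove the containment of the reflected near-part inside $S\setminus R$. Here I would use the representation $R = \mathrm{Sec}\setminus\triangle\mathbf{o}\mathbf{a}\mathbf{b}$, a circular sector with its central triangle removed, whose inner boundary is the straight chord $\overline{\mathbf{a}\mathbf{b}}$, and organize the bookkeeping through the two points where $\ell$ crosses $\partial R$, namely $\mathbf{c} = \overline{\mathbf{a}\mathbf{b}}\cap\ell$ and $\mathbf{d} = \overline{\mathbf{b}\mathbf{b}'}\cap\ell$.

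The main obstacle is exactly this containment in the general (non-symmetric) configuration, where the angular span of $R$ differs from that of $S$ and the two regions need not be nested, so a reflected point could a priori leave $K$ or fall back onto $R$. I would handle this by fixing the orientation of $\ell$ advantageously and checking, region by region around $\mathbf{c}$ and $\mathbf{d}$, that reflected points remain in $K$ and off $R$. A robust alternative that sidesteps the explicit bijection is to prove instead the one-dimensional domination $V_R(\rho) := |\{\mathbf{x}\in R : \|\mathbf{x}\|\le\rho\}| \ge V_S(\rho)$ for all $\rho$, which follows from the same sector-minus-triangle description together with the fact that both regions share the outer circle $\partial K$; since $V_R(r)=V_S(r)=s$ and $\int_\Omega f(\|\mathbf{x}\|)\,d\mathbf{x} = \int_0^r f(\rho)\,dV_\Omega(\rho)$, integrating by parts against the non-positive differential $df$ then yields $\int_S f \le \int_R f$. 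I expect the reflection picture and the distribution-function inequality to be two faces of the same estimate, with the reflection supplying the cleaner geometric intuition and the distribution inequality the more routine verification.
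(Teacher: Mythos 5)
Your primary construction has a genuine gap: the reflection $\Phi$ across the chord line $\ell$ does not map $R\setminus S$ onto $S\setminus R$ in general. Equality of areas, $|R\setminus S|=|S\setminus R|$, gives no congruence between these two regions. For instance, when $\mathbf a$ is close to $\mathbf o$ the region $R$ reaches almost to the center while $S$ stays at distance $h$ from it, so $R\setminus S$ contains points at distance much more than $h$ from $\ell$ on the near side; their reflections land outside $K$ altogether, and $\Phi$ is then not even a map into $S$. This is not a matter of ``checking region by region'' or of choosing the orientation of $\ell$ cleverly: the containment of the reflected near-part inside $S\setminus R$ is simply false for generic configurations, so the bijection route collapses.

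Your fallback, the distribution-function inequality $V_R(\rho)\ge V_S(\rho)$, is the correct statement and is in essence what the paper proves, but you have labelled as ``routine'' precisely the step that carries all the geometric content; the sector-minus-triangle description by itself does not yield it. The missing idea is the paper's normalization of the position of $S$: rotate $S$ so that its chord crosses $\partial R$ in two points $\mathbf c$ and $\mathbf d$ that are \emph{equidistant from} $\mathbf o$. With that choice, every point of $R\setminus S$ lies in the closed disk of radius $\|\mathbf c\|$ about $\mathbf o$, while every point of $S\setminus R$ lies outside it. Since $|R\setminus S|=|S\setminus R|$ and $f$ is non-increasing, this gives $\int_{R\setminus S} f\left(\|\mathbf x\|\right)d\mathbf x \ \ge\ f\left(\|\mathbf c\|\right)|R\setminus S| \ \ge\ \int_{S\setminus R} f\left(\|\mathbf x\|\right)d\mathbf x$ directly, and adding $\int_{R\cap S}$ to both sides finishes the proof with no need for the layer-cake integration by parts. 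So the second half of your plan can be completed, but only after supplying this positioning argument, which is the heart of the lemma.
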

\begin{proof}
As in Figure \ref{fig:Fig79}, position the circular segment, $S$, so that there are two points, $\mathbf{c}$ and $\mathbf{d}$, at $\partial S \cap \partial R$ equidistant to $\mathbf{o}$. 
Since, aside from  $\mathbf{c}$ and $\mathbf{d}$, each point of the region $R \setminus S$ is closer to $\mathbf{o}$ then each point in $S \setminus R$, using the monotonicity of $f$, we have the inequality 
$$
\int_R 
\ =  \ 
\int_{R \cap S} + \int_{R \setminus S} 
\ \geq \ 
\int_{S \cap R} + \int_{S \setminus R} 
\ = \ 
\int_{S},
$$
where the integrand  for each of these integrals is $f\left( \| \mathbf{x} \| \right)  \ d\mathbf{x}$. 
\end{proof}

The following Lemma is a summary of a discussion in  \cite[p.14--16]{toth1953}. Since this discussion contains quite a bit of supplementary information, we more closely follow  \cite{Bambah1952}. 
\begin{lem} \label{lem:Lemma3} Consider the Voronoi partition of a convex hexagon $C\subset \mathbb R^n$ by the points $\{ \mathbf{p}_i \}_{i=1}^n \subset C$. If $P_i$ denotes the number of vertices in the Voronoi cell for point $\mathbf{p}_i$ and $N = \sum_{i=1}^n P_i$, then $ N \leq 6 n$.  
\end{lem}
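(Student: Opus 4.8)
The plan is to treat the Voronoi partition of $C$ as a connected planar graph and run a single Euler-characteristic count, feeding in only two geometric facts: that every interior vertex of a planar Voronoi diagram has degree at least $3$ (at least three cells, hence three bisector edges, meet there), and that the boundary polygon $C$ has at most $6$ corners. I would write $V$, $E$, $F$ for the vertices, edges, and faces of the subdivision; the faces are the $n$ cells together with the single unbounded outer face, so $F = n+1$ and Euler's formula gives $|E| = |V| + n - 1$.

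First I would split vertices and edges according to the boundary $\partial C$. Since $\partial C$ is a single cycle, its number of vertices equals its number of edges, i.e. $e_{\mathrm{bd}} = |B|$ where $|B|$ is the number of boundary vertices; subtracting this from Euler's relation yields the clean identity $e_{\mathrm{int}} = |I| + n - 1$, where $|I| = |V| - |B|$ is the number of interior vertices and $e_{\mathrm{int}}$ the number of interior bisector edges. Next I would double-count edge--face incidences: every edge borders exactly two faces, so the face degrees sum to $2|E|$; peeling off the outer face (whose degree is $e_{\mathrm{bd}}$) and using that each convex cell has as many vertices as edges expresses the target quantity as $N = 2|E| - e_{\mathrm{bd}} = 2|I| + |B| + 2n - 2$.

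The core estimate then comes from double-counting interior-edge endpoints. Each interior vertex contributes at least $3$ such incidences (degree $\ge 3$, all its edges interior), and each boundary vertex lying in the relative interior of a side of $C$ carries at least one interior edge; combined with $e_{\mathrm{int}} = |I| + n - 1$ this gives $|I| + |B_s| \le 2n - 2$, where $|B_s|$ counts the non-corner boundary vertices. In particular $|I| \le 2n-2$ and $|I| + |B_s| \le 2n-2$ both hold. Substituting into the identity for $N$, and writing $|B| = |B_s| + |B_c|$ with $|B_c| \le 6$ the number of corners of the hexagon, I would bound $2|I| + |B_s| = |I| + (|I| + |B_s|) \le 4n - 4$, whence $N \le 6n - 6 + |B_c| \le 6n$.

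I expect the only delicate point to be the bookkeeping at the boundary: distinguishing the corners of $C$ (which may have degree $2$ and are capped at $6$ — the hexagon being precisely where the whole bound is spent) from the bisector-crossing boundary vertices (which always carry an interior edge), and making sure the edge count honestly separates interior from boundary edges. The algebra is routine once the three ingredients — Euler, degree at least $3$ at interior vertices, and at most $6$ corners — are combined correctly. One subtlety worth stating explicitly is that degeneracies, namely Voronoi vertices of degree larger than $3$, only strengthen the incidence inequality, so no general-position hypothesis is needed.
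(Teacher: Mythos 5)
Your argument is correct and rests on exactly the same three ingredients as the paper's proof of Lemma \ref{lem:Lemma3}: Euler's formula for the planar subdivision, a handshake/double-count of incidences, the minimum degree $3$ at non-corner vertices, and the at most $6$ corners of $C$ absorbing the slack. The only difference is bookkeeping --- you separate interior from boundary vertices and edges explicitly and count interior-edge endpoints, whereas the paper bounds the total degree sum by $3(V-6)+2\cdot 6$ and then subtracts the $\geq 6$ sides of the outer face --- so this is essentially the paper's proof.
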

\begin{proof} 
Let $C = \cup_{i=1}^n T_i$ denote the Voronoi partition. Denote the unbounded exterior polygonal region by $T_0$. We view the  partition interfaces together with $\partial C$ as a connected planar graph with $V$ vertices, $E$ edges (sides), and $n+1$ regions (including the exterior region). Euler's characteristic then states that $$V - E + n =1. $$ 
Let $d_\ell$, for $\ell = 1,\ldots,V$   be the degree of each vertex. Note that, except at the vertices of $C$, $d_\ell$ is at least 3, giving 
$$
\sum_{\ell=1}^V d_\ell \geq 3(V-6) + 2\cdot 6 = 3V - 6.
$$
Since each edge connects exactly 2 vertices, $\sum_\ell d_\ell = 2 E$ (a.k.a. the Handshake lemma) and we have  
$$
2E \geq 3V-6 = 3( E - n +1) - 6 
 \qquad \implies \qquad E \leq 3n + 3.
 $$
Polygon $T_i$ has $P_i$ sides for $i=0,\ldots n$.  Since each side is incident to exactly two regions, $2 E = \sum_{i=0}^n P_i$. Since the exterior polygon has at least 6 sides, 
$$
N = \sum_{i=1}^n P_i  \leq \sum_{i=0}^n P_i - 6 = 2E - 6 \leq 2(3n+3) -6 = 6n 
$$
as desired.
\end{proof}

\begin{figure}[h!]
\begin{center}
\resizebox{6.5cm}{!}{\input 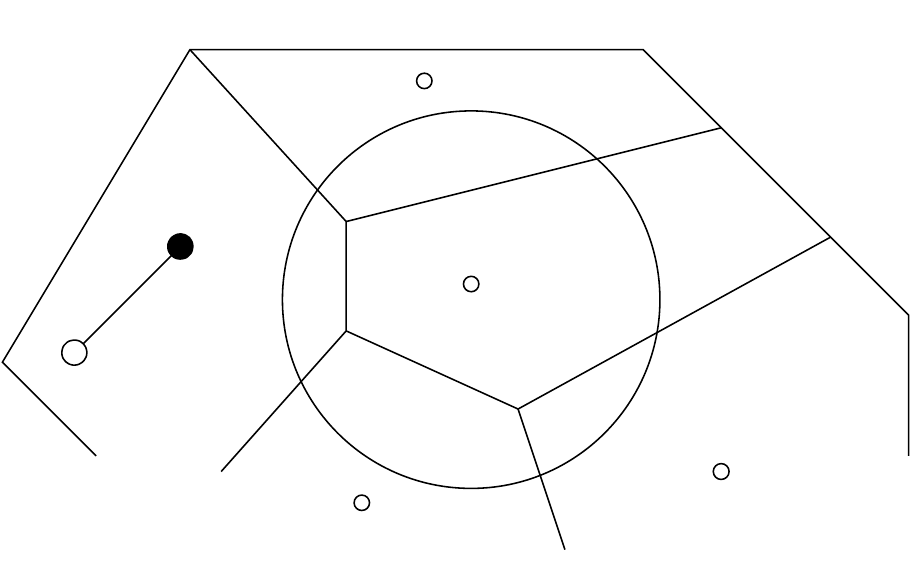_t} 
\begin{tikzpicture}[thick,scale=2.0]
    \draw [color=black] circle(1cm);
    \draw (0,0) node[circle,fill,inner sep=1pt] (origin){};
    %\draw (0,0) node[circle,fill,inner sep=1pt,label=above left:$\mathbf{o}$] (origin){};
    \draw (origin) (160:1.1cm) coordinate [label=$K$] (K);
    \draw (origin) (45:.4cm) coordinate [label=$\sigma$] (); 
    \draw (0:1cm) \foreach \x in {60,120,...,359} {-- (\x:1cm) } -- cycle (90:1cm);      
\end{tikzpicture}
\qquad
\begin{tikzpicture}[thick,scale=2.0]
    \draw [color=black] circle(1cm);
    \draw (0,0) node[circle,fill,inner sep=1pt] (origin){};    
    %\draw (0,0) node[circle,fill,inner sep=1pt,label=left:$\mathbf{o}$] (origin){};    
    \draw (origin) (140:.8cm) coordinate [label=$K$] (K); 
           
   \draw (origin) (60:1cm) coordinate (s1){};
   \draw (origin) (-60:1cm) coordinate (s2){};
   \draw [color=blue] (s1) -- (s2);
   \draw [fill=blue,opacity=.1] (s1) -- (s2) arc [radius=1, start angle=-60, end angle=60];  
   \draw [color=blue]  (.8cm,0) coordinate [label=$H$] (S); 

   \draw (origin) (70:1cm) coordinate (s3){};
   \draw (origin) (-70:1cm) coordinate (s4){};   
   \draw [color=red] (s3) -- (s4);
   \draw [fill=red,opacity=.1] (s1) -- (s2) -- (s4) -- (s3);  
   \draw [color=red] (.42cm,0) coordinate [label=$U$] (S);   
\end{tikzpicture}

\caption{Illustrations for the proof of Theorem \ref{prop:MomThm}. See \cite[Fig.80]{toth1953}. }
\label{fig:Fig80}
\end{center}
\end{figure}

\begin{proof}[Proof of Theorem \ref{prop:MomThm}] 
Without loss of generality, we can take that the points $\mathbf{p}_1,\ldots \mathbf{p}_n$ to lie in $C$,  since the left hand side of  \eqref{eq:RegHex} increases under projection onto $C$. We consider the Voronoi partition $\cup_{i=1}^n T_i$ of $C$ by the centers $\{\mathbf{p}_i\}_{i=1}^n$, {\it i.e.}, the partition  of $C$ into convex polygons $T_1, \ldots T_n$ such that for all $\mathbf{x} \in T_i$, 
$$
\| \mathbf{x} - \mathbf{p}_{i} \| \leq \| \mathbf{x} -  \mathbf{p}_{j}\|,  \quad \forall j\neq i.
$$  
See Figure~\ref{fig:Fig80}(left).
We decompose the integral on the left hand side of \eqref{eq:RegHex}:
$$
\int_C f\left(d (\mathbf{p}) \right) \ d\mathbf{p}   \ = \
\sum_{i=1}^n \int_{T_i} f\left( \|\mathbf{p}- \mathbf{p}_i\| \right) \ d\mathbf{p} . 
$$

We label the $P_i$ vertices of  polygon $T_i$ in cyclic order by $\mathbf{e}_1, \ldots, \mathbf{e}_{P_i}$.
To each vertex $\mathbf{p}_i$, we take a circle $K_i $ with center $\mathbf{p}_i$ of size such that (a translation to the origin of) $K_i$ circumscribes the regular hexagon, $\sigma$, with  volume $ |\sigma| = |C|  /n$. 
Let $R^i_\ell \subset  K_i \setminus T_i$ for $\ell = 1,\ldots,P_i$ be a region associated to the $\ell$-th edge of  Voronoi  cell $i$ as illustrated in Figure \ref{fig:Fig80}(left). Of course, we allow for $R^i_\ell$ to be empty. Additionally we define $T_i' = T_i \setminus K_i$, which could also be empty. 
We then decompose the integral 
$$
\int_{T_i} = \int_{K_i} + \int_{T_i'} - \sum_{\ell=1}^{P_i} \int_{R^i_\ell}
$$
where the integrands are all $f\left( \| \mathbf{p}- \mathbf{p}_i \| \right) \ d\mathbf{p}$. Hereinafter we will frequently suppress  integrands to simplify exposition. Note that the first integral is the same for all $i=1,\ldots,n$, 
$$
\int_{K_i} f\left( \| \mathbf{p}- \mathbf{p}_i\| \right) \ d\mathbf{p} 
\ = \
\int_{K} f\left( \| \mathbf{p}  \| \right) \ d\mathbf{p} . 
$$

Next, we consider the rearrangement of each region $R^i_\ell$ into a circular segment $S = S(s^i_\ell)$ with volume $s^i_\ell = |S| = |R^i_\ell |$. Using  Lemma \ref{lem:Lemma2},  we have
$$
\int_{R^i_\ell} f\left( \| \mathbf{p} - \mathbf{p}_i \| \right) \ d\mathbf{p} 
\ \geq \ 
\omega \left( |R^i_\ell | \right),
$$ 
giving 
$$
\int_{T_i}  
\ \leq \ 
 \int_{K} + \int_{T_i'} - \sum_{\ell=1}^{P_i} \omega \left( |R^i_\ell | \right). 
$$
Now summing over $i=1,\ldots, n$, we have that 
$$
\int_C f\left( d (\mathbf{p}) \right) \ d\mathbf{p} 
\ = \ \sum_{i=1}^n \int_{T_i}   
\ \leq  \
n \int_{K} 
\ + \ \sum_{i=1}^n \int_{T_i'} 
\ - \ \sum_{i=1}^n \sum_{\ell=1}^{P_i} \omega \left( |R^i_\ell |  \right). 
$$
Let $N = \sum_{i=1}^n P_i$. 
Using the convexity of $\omega$ (Lemma \ref{lem:Lemma1}) and Jensen's inequality, we have 
$$
\int_C f\left(d (\mathbf{p}) \right) \ d\mathbf{p}   
\ \leq  \
n \int_{K} 
\ + \ \sum_{i=1}^n \int_{T_i'} 
\ - \ 
N \cdot  \omega \left( \frac{1}{N} \sum_{i=1}^n \sum_{\ell=1}^{P_i}  |R^i_\ell |  \right). 
$$
Letting $T' = \cup_{i=1}^n T_i'$, we  have that 
$$ 
|C| \ = \ 
n\cdot |K| \ + \ |T'| \  - \ \sum_{i=1}^n \sum_{\ell=1}^{P_i}  |R^i_\ell | 
\quad \implies \quad 
\frac{1}{N} \sum_{i=1}^n \sum_{\ell=1}^{P_i}  |R^i_\ell | = \frac{n\cdot |K| \ + \ |T'| \ - |C|}{N} . 
$$ 
Since $\omega(0) = 0$ and $\omega$ is an increasing, convex function it follows that $\frac{\omega(x)}{x} $ is a continuous and increasing function. Using Lemma \ref{lem:Lemma3}, we have that $N \leq 6 n$ which implies  
\begin{equation} \label{eq:AlmostThere}
\int_C f\left(d (\mathbf{p}) \right) \ d\mathbf{p}   
\ \leq  \
n \int_{K} 
\ + \ \sum_{i=1}^n \int_{T_i'} 
\ - \ 
6n \cdot  \omega \left( \frac{  n\cdot |K| + |T'| - |C| }{6n}  \right). 
\end{equation}
Let $H$ be one of the circular segments in $K \setminus \sigma$. Recalling $  |C|  = |\sigma| n$,  we have 
$$
H = S( |H| ) =  S\left( \frac{   |K| - |\sigma| }{6} \ \right) = S\left(  \frac{  n\cdot |K| - |C| }{6n}  \right).
$$ 
We define $U$ to be the set difference of two circular segments of $K$, 
$$U = S\left( \frac{  n\cdot |K | + |T'| - |C| }{6n} \ \right) \setminus H, 
$$ 
with volume $|U| = |T'| / 6n $;  see Figure \ref{fig:Fig80}(right). We have
$$
\omega \left( \frac{  n\cdot |K| + |T'| - |C| }{6n} \ \right) 
\ = \ 
\omega \left( |H| \right) +  \int_{U} f\left( \| \mathbf{p} \| \right) \ d\mathbf{p}. 
$$
From \eqref{eq:AlmostThere}, we have that 
\begin{align}
\label{eq:AlmostThere2}
\int_C f\left(d (\mathbf{p}) \right) \ d\mathbf{p}   
 \ \leq  & \  
n \left[ \int_{K}  f\left( \| \mathbf{p}  \| \right) \ d\mathbf{p} - 6 \cdot \omega \left( |H| \right) \right] \\
\nonumber
&  + \ \sum_{i=1}^n \int_{T_i'} f\left( \|\mathbf{p}- \mathbf{p}_i \| \right) \ d\mathbf{p}
\ - \ 
6n \cdot  \int_{U} f\left( \| \mathbf{p} \| \right) \ d\mathbf{p}
\end{align}
Noting that $U \subset K$, we have $ f(r) = \min\{f\left( \|  \mathbf{p} \|  \right)\colon \mathbf{p} \in U\}$ where $r$ is the radius of $K$ and similarly $f(r) = \max\{f\left( \|  \mathbf{p} -  \mathbf{p}_i \| \right) \colon \mathbf{p} \in T_i'\}$. Thus, in the second line in the right hand side of  \eqref{eq:AlmostThere2}, we have 
$$
\sum_{i=1}^n \int_{T_i'} f\left( \| \mathbf{p}- \mathbf{p}_i \| \right) \ d\mathbf{p} - 
6n \cdot  \int_{U} f\left( \| \mathbf{p} -  \mathbf{p}_i \| \right) \ d\mathbf{p}
 \ \leq  \ 
 f(r) \cdot \left( |T'|  - 6 n |U| \right) = 0. 
$$
%Thus, the second line in the right hand side of  \eqref{eq:AlmostThere2} is nonpositive. 
The first line in the right hand side of \eqref{eq:AlmostThere2} is exactly $n \cdot \int_\sigma  f\left( \| \mathbf{p}  \| \right) \ d\mathbf{p}$, which proves \eqref{eq:RegHex}.
\end{proof}

\begin{rem} \cite{gruber1999} states the Moment Theorem for convex polygons with $n=3,4,5,$ or 6 vertices. This generalized statement follows from Proposition \ref{prop:MomThm} by introducing false vertices for $n=3,4,$ or 5. 
\end{rem}

%\begin{cor} \label{cor:MomLemma}
%Assume $f\colon [0,\infty) \to \mathbb R$ is a non-increasing function.  
%For any convex hexagon $C$ and point $\mathbf{p} \in \mathbb R^2$, we have 
%$$
%\int_{C} f\left( \| \x - \mathbf{p} \|^2 \right) \ d\x  \ \leq \ \int_{\sigma} f\left( \| \x \|^2 \right) \ d\x,
%$$
%where $\sigma$ is the regular hexagon centered at the origin with $|T| = |\sigma |$. 
%\end{cor}
%

\begin{thm}[The Moment Lemma, \cite{Toth1963}] \label{thm:MomLemma}
Assume $f\colon [0,\infty) \to \mathbb R$ is a non-increasing function.  
Let  $C\subset \mathbb R^2$ be a convex $n$-gon and $\sigma$ the regular $n$-gon centered at the origin with $|C| = |\sigma|$. Then 
\begin{equation} \label{eq:MomLem}
\int_C f\left( \| \mathbf{p} \| \right) \ d\mathbf{p} 
\ \leq \ 
\int_\sigma  f\left( \| \mathbf{p} \| \right) \ d\mathbf{p} . 
\end{equation}
\end{thm}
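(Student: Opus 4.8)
The plan is to run the same rearrangement scheme used to prove Theorem~\ref{prop:MomThm}, but in the much simpler setting of a single polygon, and to exploit the fact that $C$ has exactly $n$ edges while the regular $n$-gon $\sigma$ is cut out of its circumscribed disk by exactly $n$ congruent circular segments. Because the edge count and the segment count now match automatically, Lemma~\ref{lem:Lemma3} is not needed. As a first reduction I would arrange $\mathbf{o}\in\mathrm{int}(C)$: replacing $C$ by the translate maximizing $\mathbf{t}\mapsto\int_C f(\|\mathbf{p}-\mathbf{t}\|)\,d\mathbf{p}$ only increases the left-hand side of \eqref{eq:MomLem} (since $f$ is non-increasing), and this maximizing translate can be taken to contain the origin in its interior, so proving \eqref{eq:MomLem} for it implies it for the original $C$.

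Next, let $K$ be the closed disk of radius $r$ centered at $\mathbf{o}$ circumscribing $\sigma$, so that $\sigma$ is inscribed in $K$ and $K\setminus\sigma$ is a union of $n$ congruent circular segments, each a copy of $H:=S(|H|)$ with $|H|=(|K|-|\sigma|)/n$. Writing $\omega(s)=\int_{S(s)}f(\|\mathbf{x}\|)\,d\mathbf{x}$ as in Lemma~\ref{lem:Lemma1}, this gives $\int_\sigma f(\|\mathbf{p}\|)\,d\mathbf{p}=\int_K f-n\,\omega(|H|)$. For $C$, I would join the endpoints of each edge $e_\ell$ to $\mathbf{o}$ and let $R_\ell\subset K\setminus C$ be the region between $e_\ell$ and the arc of $\partial K$ in the corresponding angular sector, which is exactly the configuration of Lemma~\ref{lem:Lemma2}; set $C':=C\setminus K$. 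As in the proof of Theorem~\ref{prop:MomThm}, the decomposition reads
\[
\int_C f \;=\; \int_K f \;+\; \int_{C'} f \;-\; \sum_{\ell=1}^n \int_{R_\ell} f,
\]
with accompanying area identity $|C|=|K|+|C'|-\sum_\ell|R_\ell|$. Applying Lemma~\ref{lem:Lemma2} in the form $\int_{R_\ell}f\ge\omega(|R_\ell|)$ and then the convexity of $\omega$ (Lemma~\ref{lem:Lemma1}) with Jensen's inequality yields
\[
\int_C f \;\le\; \int_K f \;+\; \int_{C'} f \;-\; n\,\omega\!\left(\tfrac1n\sum_{\ell=1}^n|R_\ell|\right).
\]

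To finish, I would use $|C|=|\sigma|$ and the area identity to obtain $\tfrac1n\sum_\ell|R_\ell|=|H|+|C'|/n$, and split the corresponding segment as $S(|H|+|C'|/n)=H\cup U$ with $|U|=|C'|/n$, so that $\omega(|H|+|C'|/n)=\omega(|H|)+\int_U f$. Substituting gives
\[
\int_C f \;\le\; \Big[\int_K f - n\,\omega(|H|)\Big] + \Big[\int_{C'}f - n\int_U f\Big] \;=\; \int_\sigma f + \Big[\int_{C'}f - n\int_U f\Big].
\]
Since every $\mathbf{p}\in C'$ lies outside $K$ we have $f(\|\mathbf{p}\|)\le f(r)$ there, while every $\mathbf{p}\in U$ lies inside $K$ so $f(\|\mathbf{p}\|)\ge f(r)$; combined with the exact area balance $|C'|=n|U|$, the bracketed term is at most $f(r)\big(|C'|-n|U|\big)=0$, which proves \eqref{eq:MomLem}.

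The hard part will be the geometric bookkeeping in the decomposition when $C\not\subseteq K$, which is the generic case, since a thin convex $n$-gon of area $|\sigma|$ protrudes far beyond the disk circumscribing $\sigma$. One must define the regions $R_\ell$ and the excess region $C'$ consistently across edges that cross $\partial K$ so that both the integral decomposition and the area identity hold and the hypotheses of Lemma~\ref{lem:Lemma2} remain valid. This is precisely what the auxiliary construction of $U$ is designed to absorb, trading the protruding area $C'$ against an equal-area inner circular strip through the uniform bound $f(r)$, mirroring the $T_i'$/$U$ device of Theorem~\ref{prop:MomThm}; the translation reduction to $\mathbf{o}\in\mathrm{int}(C)$ and the degenerate configurations (origin on $\partial C$, collinear endpoints) would also need to be verified.
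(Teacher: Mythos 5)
Your proposal is correct and follows essentially the same route as the paper's proof: the same decomposition $\int_C = \int_K + \int_{C'} - \sum_\ell \int_{R_\ell}$, the same application of Lemma \ref{lem:Lemma2} followed by convexity of $\omega$ and Jensen's inequality, and the same device of absorbing $C'$ into the annular strip $U$ via the uniform bound $f(r)$ on $\partial K$. Your observation that Lemma \ref{lem:Lemma3} becomes unnecessary because the edge count is exactly $n$ matches the structure of the paper's argument.
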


\bigskip

The moment Lemma for hexagons follows from the Moment Theorem  when $n=1$. 
We also mention that Peter M. Gruber's proof of the Moment Theorem relies on the Moment Lemma  \cite{gruber1999,Gruber2007Book}. The following proof, which closely follows \cite{Toth1963}, is very similar to the proof of the Moment Theorem.

\begin{figure}[t!]
\begin{center}
\begin{tikzpicture}[thick,scale=3.0]
    \draw [color=black] circle(1cm);
    \draw (0,0) node[circle,fill,inner sep=1pt] (origin){};
    %\draw (0,0) node[circle,fill,inner sep=1pt,label=above left:$\mathbf{o}$] (origin){};
    \draw (origin) (160:1.1cm) coordinate [label=$K$] (K);
    \draw [color=blue] (origin) (20:.4cm) coordinate [label=$C$] (); 
    
    \draw (origin) (100:.7cm) node[circle,fill,inner sep=1pt] (A){};
    \draw (origin) (100:1cm) node[circle,fill,inner sep=1pt] (A'){};
    \draw (origin) (10:1.1cm) node[circle,fill,inner sep=1pt] (B){};
    \draw (origin) (270:1.2cm) node[circle,fill,inner sep=1pt] (C){};
    \draw (origin) (240:1.2cm) node[circle,fill,inner sep=1pt] (D){};    
    \draw (origin) (180:1cm) node[circle,fill,inner sep=1pt] (D') {}; 
    \draw (origin) (180:.7cm) node[circle,fill,inner sep=1pt] (E){};
    \draw [color=blue,name path=line3] (A) -- (B) -- (C) -- (D)-- (E) -- (A) ;

    \draw (origin) -- (A'); 
    \draw (origin) (50:.75cm) coordinate [label=$R_1$] ();    
    \draw (origin) (155:.7cm) coordinate [label=$R_2$] ();
    \draw (origin) -- (D'); 
    \draw (origin) (210:.9cm) coordinate [label=$R_3$] ();    
    \draw (origin) (-40:.9cm) coordinate [label=$R_5$] ();    
    \draw (origin) (265:1.2cm) coordinate [label=$C'$] (K);

\end{tikzpicture}

\caption{Illustration for the proof of Theorem \ref{thm:MomLemma}.}
\label{fig:FigMomLemma}
\end{center}
\end{figure}

\begin{proof}[Proof of Theorem \ref{thm:MomLemma}.]
We may assume that $C$ contains the origin. Otherwise, by translating $C$ towards the origin the integral on the left side of \eqref{eq:MomLem} would increase. Let $K$ denote the circle that circumscribes $\sigma$. 
Define $R_\ell \subset K$, $\ell = 1,\ldots, n$ to be the regions as in Figure \ref{fig:FigMomLemma} so that $\cup_\ell R_\ell = K \setminus C$.  We allow some of the $R_\ell$ to be empty.   Define $C' = C \setminus K$. 
We can then write
$$
\int_C =  \int_K - \sum_\ell \int_{R_\ell} + \int_{C'}
$$
where the integrands here and below are understood to be $ f\left( \| \mathbf{p} \| \right) \ d\mathbf{p}$. 
Using Lemma \ref{lem:Lemma2}, we have that $\int_{R_\ell} \geq \omega ( | R_\ell| )$.  Using the convexity of $\omega$ (Lemma \ref{lem:Lemma1}) and Jensen's inequality, we then have that 
\begin{align*}
\int_C  \ \leq \
 \int_K - \sum_i \omega( |R_\ell| ) + \int_{C'} \  \leq  \ 
 \int_K - n \omega\left(  \frac{1}{n} \sum_i |R_\ell| \right) + \int_{C'} 
\end{align*}
Let $H$ be one of the  circular segments in $K \setminus \sigma$. 
Using the equality of the volumes
\begin{align*}
|C| = |K|  - \sum_\ell |R_\ell|  + |C'| 
\qquad \text{and} \qquad 
|\sigma| = |K| - n |H| 
\end{align*}
we obtain 
$$
\frac{1}{n} \sum_\ell |R_\ell| = \frac{1}{n} |C' | + |H|.
$$ 
As in Figure \ref{fig:Fig80}(right), let $U$ be the set difference between the two circular segments of $K$,  
$$
U = S\left( \frac{1}{n} |C' | + |H| \right) \setminus H, 
$$ 
we have that 
$$
\omega\left(  \frac{1}{n} \sum_\ell |R_\ell| \right) = \omega(|H|) + \int_U. 
$$
Using the fact that all points in $U$ are closer to the origin then points in $C'$ and the monotonicity of $f$, we have that 
$ n \int_U \geq \int_{C'}$. 
Finally, observing that $\int_\sigma = \int_K - n \cdot \omega(|H|)$, we conclude that 
\begin{align*}
\int_C  \ \leq \ 
\int_K - n \cdot \left(  \omega\left(  |H| \right) + \int_U \right) + \int_{C'} \ \leq \ 
\int_\sigma, 
\end{align*}
as desired. 
\end{proof}

\section*{Acknowledgments} The authors wish to thank Mikael Rechtsman and Sylvia Serfaty for very helpful conversations during the preparation of this manuscript.
BO was supported in part by U.S. NSF DMS-1461138.  JLM was supported in part by U.S. NSF DMS-1312874 and NSF CAREER Grant DMS-1352353.

%\clearpage
{ \small
\printbibliography 
}
%{\footnotesize 
%\bibliographystyle{alpha}
%\bibliography{EqTorus.bib} }

\end{document}